\documentclass[reqno, a4paper]{amsart}

\usepackage{amssymb, cancel, amsaddr, verbatim, graphicx, epstopdf}
\usepackage{amsmath}

\usepackage[scale=0.80, centering]{geometry}

%%%%%%%%%%%%%%%%%%%%%%%%%%%%%%%%%%%%%%%%%
%%  Allow breaking inside long multi-line
%% equations.
\allowdisplaybreaks[4]

\usepackage{setspace}
%\setstretch{1.5}

%%%%%%%%%%%%%%%%%%%%%%%%%%%%%%%%%%%%%%%%%
%%  Some useful macros.

\newcommand{\T}[1]{\tilde{#1}}

\newcommand{\vn}[1]{\lVert#1\rVert}

\newcommand{\IP}[2]{\left< #1 , #2 \right>}

\newcommand{\nDelta}{\Delta^\perp}
\newcommand{\nnabla}{\nabla^\perp}
\renewcommand{\H}{\vec{H}}

%%%%%%%%%%%%%%%%%%%%%%%%%%%%%%%%%%%%%%%%%
%%  Some non-LaTeX native symbols I use.
\newcommand{\R}{\ensuremath{\mathbb{R}}}

\newcommand{\SW}{\ensuremath{\mathcal{W{}}}}

\newcommand{\vSG}{\ensuremath{\vec{\mathcal{G{}}}}}
\newcommand{\vSW}{\ensuremath{\vec{\mathcal{W{}}}}}
\renewcommand{\T}{\ensuremath{\vec{T}}}

\newcommand{\SL}{\ensuremath{\mathcal{L{}}}}

\renewcommand{\H}{\ensuremath{\vec{H}}}

%%%%%%%%%%%%%%%%%%%%%%%%%%%%%%%%%%%%%%%%%
%%  Theorem environments.
\newtheorem{thm}{Theorem}{\bf}{\it}
\newtheorem{cor}[thm]{Corollary}{\bf}{\it}
\newtheorem{prop}[thm]{Proposition}{\bf}{\it}
\newtheorem{lem}[thm]{Lemma}{\bf}{\it}
{\bf}{\it}

\theoremstyle{definition}
\newtheorem*{rmk}{Remark}{\bf}{\rm}

\begin{document}

\title
[Fourth-order gap phenomena for surfaces with boundary]
{Gap phenomena for a class of fourth-order geometric differential operators on surfaces with boundary}
\author{Glen Wheeler${}^{*}$}
\address{Otto-von-Guericke Universit\"at Magdeburg, Fakult\"at f\"ur Mathematik, Institut f\"ur
Analysis und Numerik, Universit\"atsplatz 2, 39106 Magdeburg, Germany\\[2mm]
\emph{Current:}
Institute for Mathematics and its Applications, University of Wollongong,
Northfields Ave, Wollongong, NSW 2522, Australia\\[2mm]
E-mail: {\tt glenw@uow.edu.au}
}
\thanks{${}^*$: Financial support from the Alexander-von-Humboldt Stiftung is gratefully
acknowledged}

\begin{abstract}
In this paper we establish a \emph{gap phenomenon} for immersed surfaces with
arbitrary codimension, topology and boundaries that satisfy one of a family
of systems of fourth-order anisotropic geometric partial differential equations.
Examples include Willmore surfaces, stationary solitons for the surface
diffusion flow, and biharmonic immersed surfaces in the sense of Chen.
On the boundary we enforce either \emph{umbilic} or \emph{flat} boundary
conditions: that the tracefree second fundamental form and its derivative or
the full second fundamental form and its derivative vanish.
For the umbilic boundary condition we prove that any surface with small
$L^2$-norm of the tracefree second fundamental form or full second fundamental form must be totally umbilic;
that is, a union of pieces of round spheres and flat planes. We prove that the stricter smallness condition allows consideration for a broader range of differential operators.
For the flat boundary condition we prove the same result with weaker hypotheses, allowing more general operators, and a stronger conclusion: only pieces of planes are allowed.
The method used relies only on the smallness assumption and thus holds without requiring the imposition of additional
symmetries.
The result holds in the class of surfaces with any genus and irrespective of the number or shape of the boundaries.
\keywords{local differential geometry\and global differential geometry\and higher order\and
geometric analysis \and higher order partial differential equations\and Willmore surfaces\and
Surface diffusion\and biharmonic submanifolds\and uniqueness theorem\and gap phenomena}
\end{abstract}

\subjclass[2000]{53C43 (Primary) 53C42, 35J30 (Secondary)}
\maketitle

\section{Introduction}
%{{{

Let us consider a complete isometric immersion $f:\Sigma\rightarrow\R^n$ of a smooth surface $\Sigma$ with
topological boundary $\partial\Sigma$.  We allow for $\partial\Sigma$ to be disconnected, empty, or non-smooth
throughout the paper. We do not make any assumptions a-priori on the topology of $\Sigma$ or of its
image $f(\Sigma)$.
Figures 1--3 illustrate some possibilities.

Suppose we are given a differential operator $\vSG$ which acts on $f$ to produce a system of partial
differential equations $\vSG(f)=0$.
Consider a tensor field $T$ on $\Sigma$.
We say that the operator $\vSG(f)$ gives rise to a \emph{gap phenomenon with respect to $T$ in $L^2$} if
the following holds: There exists a universal constant $\varepsilon>0$ such that any solution of
$\vSG(f) = 0$ with $\vn{T}^2_{L^2} \le \varepsilon$ must in fact satisfy $T \equiv 0$.

\begin{figure}[t]
\label{F1}
\includegraphics[width=5cm]{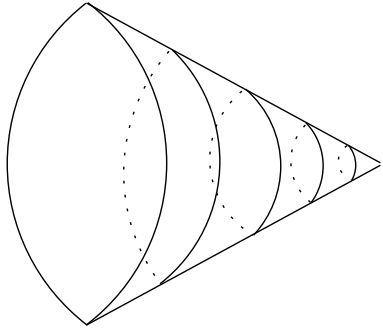}
\caption{The image $f(\Sigma)$ is depicted.
This piece of a cone has two disconnected components as its topological boundary, one
degenerate point-circle and another standard circle.}
\end{figure}

Gap phenomena are by now classical and prolific throughout the literature \cite{A90,F80,L69,R70,S68,Y74,Y09}.
In this paper we are concerned with identifying general conditions under which the operator $\vSG$ gives rise to gap phenomena with respect to the tracefree second fundamental form $A^o$ and the second fundamental form $A$.
We have chosen to concentrate on the case where $\vSG$ is anisotropic and fourth-order, not necessarily arising from a variational principle. On $\partial\Sigma$ we enforce that $|\nnabla A^o| = |A^o| = 0$, which we term
\emph{umbilic boundary conditions}.
The derivative $\nnabla$ is the induced connection on the normal bundle.
Our main results are gap phenomena with respect to $A$ and $A^o$ so long as $\vSG$ has $\nDelta\H$ as its leading order term and that the remaining nonlinearities may be bounded by an expression which is critical in the Sobolev
embedding sense.
Here we have used $\nDelta$ for the natural induced Laplacian in the normal bundle ($\Delta$ is the Laplace-Beltrami operator on $(\Sigma, f^*g^{\R^n})$) and $\H$ for the mean curvature vector corresponding to the immersion $f$.
One may think of the growth condition on the reaction terms in $\vSG$ as covering the subcritical and critical cases of second order differential operators acting on the curvature of $f$.  Certainly, one does not expect to find a
gap phenomenon in supercritical cases. This is not strictly true since a certain power of $|A^o|$ must be present for the gap phenomena to hold. Under other boundary conditions, such as the more restrictive \emph{flat boundary
conditions}, one can remove this restriction. This result is a perturbation of our main result (Theorem \ref{TMmt}) and we also state it.
Our precise assumptions and results are detailed in Section \ref{SCTsettingresults}.

Our primary motivating examples for the operator $\vSG$ are the Euler-Lagrange operator $\vSW(f) :=
\nDelta\H + A^o_{ij}\IP{(A^o)^{ij}}{\H}$ for the Willmore functional $\SW = \frac14\int_\Sigma
|\H|^2d\mu$, giving rise to Willmore surfaces, and the differential operators $\nDelta\nDelta$ and
$\Delta\nDelta$, giving rise stationary solitons for the surface diffusion flow and biharmonic
immersions respectively.

The paper is organised as follows.  In Section 2 we set our notation and give precise statements of our results.
In Section 3 we establish local estimates in $L^2$ for the differential operator $\vSG$ from below. There the umbilic boundary conditions are critical.
Section 4 is where we incorporate the various smallness conditions and prove Theorem \ref{TMmt}.
Here we require a version of the Michael-Simon Sobolev inequality for manifolds with boundary.
This is well-known, but a proof is difficult to find in the literature: for the convenience of the
reader, we have provided a proof in the appendix.

%}}}

\section{Setting and main results}
\label{SCTsettingresults}
%{{{

Suppose $\Sigma$ is a surface with boundary (or boundaries) $\partial\Sigma$ isometrically immersed
via a smooth immersion $f:\Sigma\rightarrow\R^n$, so that the Riemannian structure on $\Sigma$ is
given by $(\Sigma, f^*g^{\R^n})$ where $g^{\R^n}$ denotes the standard Euclidean metric on $\R^n$
and $f^*g^{\R^n}$ is the pullback metric or metric induced via $f$.
Consider the class of fourth-order geometric differential operators $\vSG$ which act on immersions
$f$ via
\begin{equation}
\label{EQgeneralpde}
\vSG(f) := a\nDelta\H + \T
\,,
\end{equation}
where $\H$ is the mean curvature vector of $f$, $\nDelta$ is the induced Laplacian on the normal
bundle $N\Sigma =  \big(T\Sigma)^\perp$, $a:\Sigma\rightarrow\R$ is a function and $\T$ is a section
of the normal bundle.  The function $a:\Sigma\rightarrow\R$ is assumed to be induced by an ambient
function $\tilde{a}:\R^n\rightarrow\R$ via the immersion $f$:
\begin{equation}
\label{EQanisotropypositive}
a(p) = (\tilde{a}\circ f)(p)\,\quad\text{ with }
\quad\inf_{x\in\R^n} \tilde{a}(x) = a_0 > 0\,.
\end{equation}
For the vector field $\T$, we assume that it is of the form $\T = \vec{\mathcal{T}}(f)$ where
$\vec{\mathcal{T}}$ is a second order differential operator with image in the normal bundle.
We additionally assume that $\T$ satisfies either the bound
\begin{equation}
\label{EQa0growthonT}
|\T|^2 \le c_0\big(|A|^2|A^o|^4 + |\nnabla A|^2|A^o|^2\big)
\end{equation}
or
\begin{equation}
\label{EQagrowthonT}
|\T|^2 \le c_1\big(|A|^{6-q}|A^o|^q + |\nnabla A|^2|A|^2\big)
\end{equation}
with $c_0$, $c_1$, $q\in(0,6]$ given constants.
In the above we have used $A$ to denote the second fundamental form of $f$, $A^o$ to denote the
tracefree second fundamental form, $\nabla^\perp$ to denote the induced covariant derivative in the
normal bundle, and $|\cdot|$ to denote the norm on tensor fields induced via the metric $g$.

\begin{figure}[t]
\label{F3}
\includegraphics[width=5cm]{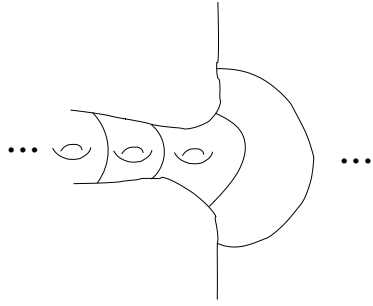}
\caption{The image $f(\Sigma)$ is depicted.
This surface has unbounded topological type.}
\end{figure}

The equation $\vSG(f) = 0$ when expressed in local coordinates is a strongly coupled system of fourth
order degenerate quasilinear partial differential equations.
We supplement \eqref{EQgeneralpde} with \emph{umbilic boundary conditions}, so called as they ensure
that $f$ is umbilic along $\partial\Sigma$: 
\begin{equation}
\label{EQbc}
|\nnabla A^o| = 
|A^o| = 0\,\qquad\text{on}\quad\partial\Sigma\,.
\end{equation}

We obtain a uniqueness theorem for solutions of $\vSG(f) \equiv 0$ satisfying \eqref{EQa0growthonT}
that are almost umbilic in a weak $L^2$-sense, as well as for solutions of $\vSG(f) \equiv 0$
satisfying \eqref{EQagrowthonT} that are almost flat in a weak $L^2$-sense.

\begin{thm}
\label{TMmt}
Suppose $\Sigma$ is an abstract two dimensional manifold with boundary (or boundaries) properly immersed via $f:\Sigma\rightarrow\R^n$.
Suppose $\vSG(f) = 0$ with umbilic boundary conditions
\eqref{EQbc} on $\partial\Sigma$.
\begin{enumerate}
\item If $\T$ satisfies \eqref{EQa0growthonT} then there exists an $\varepsilon > 0$ such that if
\begin{equation}
\label{EQsmallnessAo}
\int_\Sigma |A^o|^2d\mu < \varepsilon
\end{equation}
then $f$ is the union of pieces of round spheres and flat planes.

\item If $\T$ satisfies \eqref{EQagrowthonT} then there exists an $\varepsilon > 0$ such that if
\begin{equation}
\label{EQAsmallness}
\int_\Sigma |A|^2d\mu < \varepsilon
\end{equation}
then $f$ is the union of pieces of round spheres and flat planes.
\end{enumerate}
\end{thm}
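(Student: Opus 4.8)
The plan is to run a localised $L^2$ (``$\varepsilon$-regularity'') argument. As $\Sigma$ need not be compact I would work with cut-offs $\gamma=\tilde\gamma\circ f$, where $\tilde\gamma\in C_c^\infty(\R^n)$ with $0\le\tilde\gamma\le1$, $\tilde\gamma\equiv1$ on $B_\rho(0)$, $\mathrm{supp}\,\tilde\gamma\subset B_{2\rho}(0)$, $|D\tilde\gamma|\le c/\rho$ and $|D^2\tilde\gamma|\le c/\rho^2$; since $f$ is proper, $\gamma\nearrow1$ pointwise on $\Sigma$ as $\rho\to\infty$. The heart of the proof is a ``lower estimate'' for $\vSG$ (carried out in Section~3): writing $\vSG(f)=0$ as $a\nDelta\H=-\T$ and testing this identity against curvature test fields — weighted by powers of $\gamma$ and, for the reaction terms, by powers of $|A^o|$ — one integrates by parts and discards all boundary integrals. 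This discarding is precisely where the umbilic conditions \eqref{EQbc} are used: on $\partial\Sigma$ we have $A^o=0$ and $\nnabla A^o=0$, and the Codazzi identity on a surface gives $\Div A^o=\tfrac12\nnabla\H$, so $\nnabla\H$ — and hence $\nnabla A$ — also vanishes on $\partial\Sigma$ and every boundary term produced by one or two integrations by parts drops out. Combining this with the Simons identity — on a surface, the connection Laplacian of $A^o$ equals the tracefree Hessian of $\H$ plus a term cubic in $A$ (the intrinsic and normal curvatures being themselves $\sim A\ast A$ via Gauss and Ricci) — and with the Codazzi consequences $|\nnabla\H|\le c|\nnabla A^o|$, $|\nnabla A|\le c|\nnabla A^o|$, one is led to a localised estimate of the schematic form
\[
\int_\Sigma \gamma^s\big(|\nnabla A^o|^2+|A^o|^4+|A^o|^2|A|^2\big)\,d\mu
\;\le\;\frac{c}{a_0}\int_\Sigma\gamma^s|\T|\big(|A^o|+|\H|+|A^o|^2|\H|\big)\,d\mu
\;+\;c\int_{[\gamma>0]}|A^o|^2\big(|D\gamma|^4+\gamma^2|D^2\gamma|^2\big)\,d\mu .
\]

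Next (Section~4) I would substitute the growth hypothesis. Under \eqref{EQa0growthonT}, $|\T|\le c(|A||A^o|^2+|\nnabla A||A^o|)\le c(|A||A^o|^2+|\nnabla A^o||A^o|)$, so Young's inequality absorbs every reaction term containing $\nnabla A^o$ or the surplus power $|A||A^o|^3$ into the left-hand side, leaving integrals in which a \emph{bare} factor of $|A|$ (equivalently $|\H|$, which is genuinely uncontrolled — a small round sphere has $A^o\equiv0$ yet arbitrarily large $|\H|$) is always multiplied by at least two further powers of $|A^o|$. These are handled with the Michael--Simon--Sobolev inequality for surfaces with boundary proved in the appendix, applied to suitable products of $\gamma$, $|A^o|$ and $|A|$: it bounds $\int_\Sigma\gamma^s|A|^2|A^o|^2\,d\mu$ (and the remaining reaction terms) by $(\int_{[\gamma>0]}|A^o|^2)$ times the left-hand side, plus lower-order cut-off terms. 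Choosing $\varepsilon$ in \eqref{EQsmallnessAo} small relative to the Sobolev constant, $c_0$ and $a_0$, these terms are absorbed, leaving $\int_\Sigma\gamma^s(|\nnabla A^o|^2+|A^o|^4)\,d\mu\le c\rho^{-4}\int_\Sigma|A^o|^2\,d\mu$; letting $\rho\to\infty$ and using $\int_\Sigma|A^o|^2<\varepsilon<\infty$ forces the left-hand side to vanish, so $A^o\equiv0$ on $\Sigma$. Finally, a connected totally umbilic surface immersed in $\R^n$ lies in an affine subspace of dimension at most three and is there an open subset of a round sphere or of a plane; allowing $\Sigma$ (hence $f(\Sigma)$) to be disconnected gives assertion (1).

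For assertion (2) the scheme is unchanged, with \eqref{EQagrowthonT} replacing \eqref{EQa0growthonT}. Now the surviving reaction and cut-off error terms carry powers of $|A|$ rather than of $|A^o|$, so the cruder bound $|\T|\le c(|A|^{3-q/2}|A^o|^{q/2}+|\nnabla A||A|)$ suffices and no surplus power of $A^o$ is needed; in exchange one uses the \emph{stronger} smallness \eqref{EQAsmallness}. With $\int_\Sigma|A|^2<\varepsilon$ the quantity $\int_{[\gamma>0]}|\H|^2$ that appears in the Michael--Simon inequality is itself $<2\varepsilon$, so on $\mathrm{supp}\,\gamma$ that inequality reduces effectively to the Euclidean one; this both supplies the small factor needed for the absorption and guarantees that the cut-off error $c\rho^{-4}\int_\Sigma|A|^2\to0$ as $\rho\to\infty$. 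One again concludes $A^o\equiv0$ and classifies as before.

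The step I expect to be the main obstacle is the absorption. The equation controls only $\T$, which involves the \emph{full} second fundamental form rather than just $A^o$; since $|A|$ (equivalently $|\H|$) is never small on spherical pieces, the reaction terms cannot simply be moved to the left. Closing the estimate in $A^o$ alone rests on three structural facts that have to be combined with care: (i) the Codazzi identity on a surface converts every curvature \emph{derivative} on the right of \eqref{EQa0growthonT}/\eqref{EQagrowthonT} into $\nnabla A^o$, so the only genuinely uncontrolled quantity left is a bare $|A|$; (ii) that bare $|A|$ always appears alongside enough powers of $|A^o|$ — exactly what the exponents in \eqref{EQa0growthonT} and \eqref{EQagrowthonT} are designed to ensure, and what the two-dimensional Simons identity arranges among the fourth-order terms; and (iii) the Michael--Simon--Sobolev inequality then converts ``bounded $|A|$ times $L^2$-small $A^o$'' into ``small constant times coercive energy.'' Keeping track of which integrals sit on which side, and making sure the cut-off error is dominated by the \emph{correct} ($L^2$-finite) curvature quantity, is the delicate part; the remaining ingredients — the Simons and Codazzi identities, the cut-off limit, and the umbilic classification — are standard.
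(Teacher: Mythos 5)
Your proposal is correct and follows essentially the same route as the paper: localise with an ambient cutoff $\gamma=\tilde\gamma\circ f$, integrate the equation by parts (the umbilic conditions together with the Codazzi consequence $\nnabla\H=2\,\nnabla\ast A^o$ kill all boundary terms), invoke Simons' identity, bound the reaction and cutoff error terms by $\|A^o\|_{L^2}^2$ (resp.\ $\|A\|_{L^2}^2$) times the coercive quantities via the boundary Michael--Simon--Sobolev inequality, absorb using the smallness, and let $\rho\to\infty$ before classifying totally umbilic surfaces. The one caveat is that your schematic coercive left-hand side must be carried at the level of \emph{second} derivatives of curvature, i.e.\ it must contain $|\nnabla_{(2)}A|^2$, $|\nnabla A|^2|A|^2$ and $|A|^4|A^o|^2$ as in Lemmas \ref{LMlm5}--\ref{LMlm6} and \ref{LMmsAo}--\ref{LMmsA} (the Michael--Simon step applied to $|\nnabla A^o|\,|A^o|\gamma^2$ produces $|\nnabla_{(2)}A^o|$, which a first-derivative left-hand side cannot absorb); with that adjustment the argument closes exactly as in the paper.
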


We note that of course $\partial\Sigma = \emptyset$ is allowed.
It is also worthwhile to note that Theorem \ref{TMmt} applies to entire immersions and does not require any growth conditions at infinity.

If we impose \emph{flat boundary conditions},
\begin{equation}
\label{EQfbc}
|\nnabla A| = 
|A| = 0\,\qquad\text{on}\quad\partial\Sigma\,,
\end{equation}
then we may take $q=0$ in \eqref{EQagrowthonT}. The conclusion of the theorem is also strengthened as we only allow planes. A precise statement is:

\begin{thm}
\label{TMmt2}
Suppose $\Sigma$ is an abstract two dimensional manifold with boundary (or boundaries) properly immersed via $f:\Sigma\rightarrow\R^n$.
Suppose $\vSG(f) = 0$ with flat boundary conditions
\eqref{EQfbc} on $\partial\Sigma$.
\begin{enumerate}
\item If $\T$ satisfies \eqref{EQa0growthonT} then there exists an $\varepsilon > 0$ such that if
\begin{equation*}
\int_\Sigma |A^o|^2d\mu < \varepsilon
\end{equation*}
then $f$ is the union of pieces of flat planes.

\item If $\T$ satisfies \eqref{EQagrowthonT}, allowing also $q=0$, then there exists an $\varepsilon > 0$ such that if
\begin{equation*}
\int_\Sigma |A|^2d\mu < \varepsilon
\end{equation*}
then $f$ is the union of pieces of flat planes.
\end{enumerate}
\end{thm}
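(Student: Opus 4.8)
The plan is to treat Theorem \ref{TMmt2} as a perturbation of Theorem \ref{TMmt}, reusing the same integral-estimate machinery but exploiting the stronger boundary conditions \eqref{EQfbc} to kill the boundary terms that previously forced the appearance of a positive power of $|A^o|$ in the reaction bound. First I would recall the core estimate from Section 3: testing $\vSG(f)=0$ against a suitable second-order curvature quantity (morally $\nDelta\H$ against itself, or $A$ against $\nDelta\nDelta$ modulo lower-order corrections) and integrating by parts twice over $\Sigma$, one obtains an inequality of the schematic form
\begin{equation*}
a_0\int_\Sigma |\nnabla^2 A|^2\,d\mu \le C\int_\Sigma |\T|\,|\nnabla^2 A|\,d\mu + (\text{boundary terms})\,.
\end{equation*}
Under \eqref{EQfbc} the boundary terms vanish outright, since each is a contraction of $A$, $\nnabla A$, or their first two derivatives against normal vector fields, evaluated on $\partial\Sigma$ where $A$ and $\nnabla A$ (hence, by the Codazzi equations and the PDE $\vSG(f)=0$, also the relevant higher derivatives in the right combinations) vanish. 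This is cleaner than the umbilic case, where only the tracefree part and its derivative are controlled on $\partial\Sigma$ and one must carry along the trace part.

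Next I would absorb the reaction term. For part (2) with $q=0$, the bound \eqref{EQagrowthonT} reads $|\T|^2 \le c_1(|A|^6 + |\nnabla A|^2|A|^2)$, so Cauchy--Schwarz and Young give
\begin{equation*}
\int_\Sigma |\T|\,|\nnabla^2 A|\,d\mu \le \frac{a_0}{2}\int_\Sigma|\nnabla^2 A|^2\,d\mu + C\int_\Sigma\big(|A|^6 + |\nnabla A|^2|A|^2\big)\,d\mu\,.
\end{equation*}
Both terms on the right are critical for the Sobolev embedding $W^{2,2}\hookrightarrow L^\infty$ in dimension two (equivalently $W^{1,2}\hookrightarrow L^p$ for all $p<\infty$), so I would apply the Michael--Simon Sobolev inequality for manifolds with boundary (the appendix version) together with interpolation: writing $\vn{A}_{L^6}^6 \le \vn{A}_{L^2}^2\,\vn{A}_{L^\infty}^4$ and bounding $\vn{A}_{L^\infty}$ and $\vn{\nnabla A}_{L^4}$ by $\vn{\nnabla^2 A}_{L^2} + (\text{l.o.t.})$, one gets
\begin{equation*}
\int_\Sigma\big(|A|^6 + |\nnabla A|^2|A|^2\big)\,d\mu \le C\Big(\int_\Sigma|A|^2\,d\mu\Big)\int_\Sigma|\nnabla^2 A|^2\,d\mu + (\text{absorbable l.o.t.})\,.
\end{equation*}
Crucially, the boundary terms generated by Michael--Simon and by the interpolation inequalities again vanish because of \eqref{EQfbc}, so there is no boundary obstruction; this is exactly where flat boundary conditions buy the improvement over \eqref{EQbc}. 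Choosing $\varepsilon$ so that $C\varepsilon < a_0/2$, the smallness hypothesis $\int_\Sigma|A|^2\,d\mu<\varepsilon$ forces $\int_\Sigma|\nnabla^2 A|^2\,d\mu = 0$, hence $\nnabla^2 A\equiv 0$, and then the Poincaré-type inequality (valid since $A$ vanishes on $\partial\Sigma$, or on each component if $\partial\Sigma\ne\emptyset$; and in the closed case combined with the smallness of $\vn{A}_{L^2}$) gives $A\equiv 0$. For part (1), $|\T|^2\le c_0(|A|^2|A^o|^4 + |\nnabla A|^2|A^o|^2)$, which is even better behaved: every term carries at least two factors of $A^o$, so the same argument with $\vn{A^o}_{L^2}^2<\varepsilon$ playing the role of the small quantity gives first $A^o\equiv 0$, i.e. $f$ is totally umbilic; but now, unlike Theorem \ref{TMmt}, the flat boundary condition $|A|=0$ on $\partial\Sigma$ upgrades this: a totally umbilic surface is locally a piece of a sphere or a plane, and the sphere pieces are excluded because a spherical cap has $|A|=\sqrt{2}/r\ne 0$ on its (nonempty, if it is a proper piece) boundary — and if $\partial\Sigma=\emptyset$ a closed totally umbilic surface with $\vn{A^o}^2_{L^2}$ small is a round sphere, which one rules out directly since on a sphere $\int_\Sigma|A|^2 = 2\int_\Sigma|\H|^2/\ldots$ is bounded below, contradicting — wait, here one should instead observe the argument actually yields $A\equiv 0$ directly in part (1) too once $A^o\equiv 0$ is fed back, because with $A^o\equiv0$ the PDE becomes $a\nDelta\H=0$ with $\H$ parallel-ish, and the flat boundary data $\H=0$ on $\partial\Sigma$ propagates; alternatively one simply reruns the $|A|$-estimate now knowing $A=\H/2\otimes g$ so $|A|\sim|\H|$ and $|\nnabla A|\sim|\nnabla\H|$, closing the estimate as before. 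Either way the conclusion is: only pieces of planes.

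The main obstacle I anticipate is the bookkeeping of the boundary integrals: one must verify that every integration by parts in Section 3 — there are several, since the operator is fourth order and the test quantity is second order — produces boundary terms that are polynomial contractions of $A$ and $\nnabla A$ alone (no "bare" derivatives of the immersion or of the metric that survive), so that \eqref{EQfbc} genuinely annihilates all of them; in particular one must use the Codazzi--Mainardi equations and the PDE itself to rewrite any $\nnabla^2 A$ or $\nnabla^3 A$ appearing on the boundary in terms of the data $A,\nnabla A$, which do vanish there. Once that is confirmed, the analytic core is identical to (and strictly simpler than) the proof of Theorem \ref{TMmt}, so I would present Theorem \ref{TMmt2} by indicating the modifications rather than repeating the full argument. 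The only genuinely new input beyond Theorem \ref{TMmt} is the elementary geometric observation that flat boundary data excludes spherical pieces, which sharpens "round spheres and flat planes" to "flat planes."
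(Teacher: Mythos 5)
Your proposal follows essentially the same route as the paper: rerun the machinery of Theorem \ref{TMmt} with $q=0$, observe that the flat boundary conditions \eqref{EQfbc} kill the one boundary term that previously forced $q>0$, and use $|A|=0$ on $\partial\Sigma$ to exclude spherical pieces at the end. Two points are worth tightening. First, you can localise exactly where flatness is needed: every boundary integral produced in Section 3 already carries a factor of $A^o$, $\nnabla A^o$ or $\nnabla\H=2\nnabla\ast A^o$ and hence vanishes under the weaker umbilic conditions \eqref{EQbc}; the only exception is the Michael--Simon boundary term in \eqref{EQmsA1}, which for $p=q/2=0$ becomes $\big(\int_{\partial\Sigma}|A|^3\gamma^2\,d\mu^\partial\big)^2$ and genuinely requires $|A|=0$ on $\partial\Sigma$. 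In particular, no rewriting of $\nnabla_{(2)}A$ or higher derivatives on the boundary via Codazzi or the PDE is ever needed, so the bookkeeping you worry about is lighter than you anticipate. Second, your endgame is weaker than the paper's and introduces an avoidable delicacy: concluding only $\int_\Sigma|\nnabla_{(2)}A|^2\,d\mu=0$ and then invoking a Poincar\'e-type or parallel-section argument works on components with nonempty boundary but is shaky on closed components (a round sphere has $\nnabla A\equiv 0$). The cleaner route, which the paper takes, is to keep the zeroth-order terms $|A|^{6-q}|A^o|^q$ (respectively $|A|^4|A^o|^2$) on the left-hand side of the final local estimate, as in Propositions \ref{PPfinal} and \ref{PPfinal2}; letting $\rho\to\infty$ then yields $|A|\equiv 0$ (for $q=0$) or $|A^o|\equiv 0$ pointwise with no further argument. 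Finally, your discussion of case (1) meanders before landing on the correct and entirely elementary observation --- a totally umbilic piece with $|A|=0$ on its boundary cannot be spherical --- which is precisely the paper's second modification; the detour through ``propagation of $\H=0$ via the PDE'' is unnecessary.
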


\begin{figure}[t]
\label{F2}
\includegraphics[width=5cm]{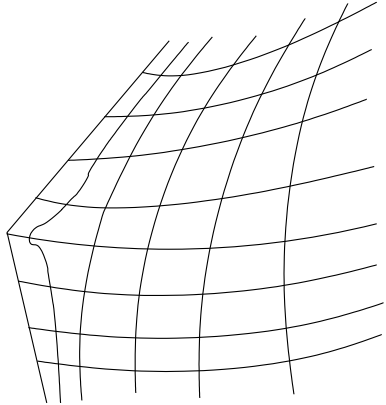}
\caption{The image $f(\Sigma)$ is depicted.
This surface with boundary is topologically a half-space. It is smooth, but its boundary has a corner. Here the corner is at the first order scale. Due to the boundary conditions \eqref{EQbc}, or \eqref{EQfbc}, any
corners must appear at the fourth or higher order scale.}
\end{figure}
\begin{rmk}
A priori, there are very few restrictions on the immersion $f$. It may possess arbitrary topology, boundaries, and so on. A posteriori, we know that $f$ is umbilic, being the union of pieces of planes and spheres. One may thus
conclude statements such as there being no toric immersion with $\vSG = 0$ satisfying the energy condition \eqref{EQsmallnessAo} for example ($\partial\Sigma \ne \emptyset$ is by no means required).
\end{rmk}

The proof of Theorem \ref{TMmt2} is almost identical to that of Theorem \ref{TMmt2}. There are two changes. First, one must set $q=0$ throughout and noting that the boundary term in \eqref{EQmsA1} now vanishes. Second, after the
conclusion that $f$ is umbilic, note that $|A|=0$ on $\partial\Sigma$ rules out spheres.

Many well-known differential operators are covered by theorems \ref{TMmt} and \ref{TMmt2} as special cases.  There are in particular three examples which we wish to enunciate: Willmore surfaces, stationary solitons of the surface
diffusion flow, and biharmonic surfaces.

{\it 1. Willmore surfaces.} An immersion $f$ is a Willmore surface if it satisfies
\begin{equation}
\label{EQwillmore}
\SW(f) := \nDelta \H + Q(A^o)\H = 0\quad\text{on }\Sigma\,,
\end{equation}
where $Q(A^o)$ acts on normal fields $\phi:\Sigma\rightarrow
N\Sigma$ by
\[
Q(A^o)\phi = A^o(e_i,e_j)\IP{A^o(e_i,e_j)}{\phi}\,,
\]
where $\{e_1,e_2\}$ is an orthonormal basis of the tangent bundle $T\Sigma$ of $\Sigma$ and the
Einstein summation convention is used.
The differential operator $\SW$ fits into the class considered here, with $\tilde{a}\equiv1$ and
satisfying \eqref{EQa0growthonT} with the estimate $|\T|^2 \le c|\H|^2|A^o|^4 \le c|A|^2|A^o|^4$.

Recently, Willmore surfaces with boundary have received quite a bit of attention.
For $\Sigma$ compact, Kuwert and Sch\"atzle proved the following gap phenomenon with
respect to $A^o$ for the Willmore operator.
\begin{thm}[{\cite[Theorem 2.7]{KS01}}]
There is an $\varepsilon>0$ such that any smooth solution of \eqref{EQwillmore} with
\[
\int_\Sigma |A^o|^2d\mu<\varepsilon \qquad\text{and}\qquad
\liminf_{\rho\rightarrow\infty}\int_{f^{-1}(B_\rho(0))}|A|^2d\mu = 0
\]
is a union of round spheres and flat planes.
\label{TMks}
\end{thm}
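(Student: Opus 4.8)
The statement is, up to the behaviour at the ends, the special case $\vSG=\SW$ of Theorem~\ref{TMmt}(1): here $\tilde a\equiv 1$, and $\T=Q(A^o)\H$ obeys \eqref{EQa0growthonT} since $|Q(A^o)\H|\le|A^o|^{2}|\H|\le|A^o|^{2}|A|$. The only genuine difference is that $\Sigma$ need not be compact and carries no boundary condition, so the r\^ole of \eqref{EQbc} in the interior estimate is instead played at the ends by the hypothesis $\liminf_{\rho\to\infty}\int_{f^{-1}(B_\rho(0))}|A|^{2}\,d\mu=0$. I would run the two-stage scheme behind Theorem~\ref{TMmt}. \textbf{Stage 1: localized coercivity.} Fix $\gamma=\eta\circ f$ with $\eta\in C_c^\infty(\R^{n})$, $\eta\equiv1$ on $B_\rho(0)$, $\operatorname{supp}\eta\subset B_{2\rho}(0)$, $|\nabla^{(k)}\eta|\le c\rho^{-k}$ (for compact $\Sigma$ one takes $\gamma\equiv1$). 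Test \eqref{EQwillmore} against $\gamma^{4}\nDelta\H$ and integrate by parts twice; $\gamma$ being compactly supported in the interior, no boundary term appears. Commuting derivatives past $\gamma$, using the Codazzi identity and the Gauss equation to absorb the curvature commutators, and pairing $\nDelta\H$ against $Q(A^o)\H$ via Young's inequality, one reaches a bound of the schematic form
\[
E(\gamma):=\int_\Sigma\gamma^{4}\bigl(|\nnabla^{2}A|^{2}+|A|^{2}|\nnabla A|^{2}+|A|^{4}|A^{o}|^{2}\bigr)\,d\mu
\ \le\ c\!\int_{[\gamma>0]}\!\bigl(|\nabla\gamma|^{4}+\gamma^{2}|\nabla^{2}\gamma|^{2}\bigr)|A|^{2}\,d\mu
\ +\ c\!\int_\Sigma\gamma^{4}|A^{o}|^{4}|A|^{2}\,d\mu .
\]

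\textbf{Stage 2: absorption.} The last term I would dispose of with the Michael--Simon Sobolev inequality on $\Sigma$ (the classical version, since $\partial\Sigma=\emptyset$ here). Applied to $u=\gamma^{2}|A|^{2}$ it gives $\int_\Sigma\gamma^{4}|A|^{4}\,d\mu\le c\bigl(\int|\nabla(\gamma^{2}|A|^{2})|+\int\gamma^{2}|A|^{2}|\H|\bigr)^{2}$; expanding $\nabla(\gamma^{2}|A|^{2})$ produces exactly the integrands $\gamma^{2}|A|\,|\nnabla A|$ and $\gamma|\nabla\gamma|\,|A|^{2}$, while $\gamma^{2}|A|^{2}|\H|\le\sqrt2\,\gamma^{2}|A|^{3}$. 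Cauchy--Schwarz then pairs each of these against the coercive quantities comprising $E(\gamma)$ (after rerunning Stage~1 with a sufficiently high power $\gamma^{s}$ so that the cutoff exponents line up), leaving in every term a small factor: $\bigl(\int_{[\gamma>0]}|A^{o}|^{2}\,d\mu\bigr)^{1/2}<\sqrt{\varepsilon}$ where an honest $A^{o}$ is available, and $\bigl(\int_{[\gamma>0]}|A|^{2}\,d\mu\bigr)^{1/2}$ otherwise. For $\varepsilon$ small the first type is absorbed into $E(\gamma)$; the second type, together with the cutoff errors $\sim\rho^{-2}\int_{f^{-1}(B_{2\rho})}|A|^{2}$, is killed by sending $\rho\to\infty$ along a sequence of radii supplied by the $\liminf$ hypothesis (a logarithmic cutoff is the standard alternative). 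Hence $E\equiv0$ for the limiting cutoff, so in particular $\int_\Sigma|A^{o}|^{2}|A|^{4}\,d\mu=0$.

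\textbf{Conclusion and main obstacle.} Because $|A^{o}|\le|A|$, vanishing of $\int_\Sigma|A^{o}|^{2}|A|^{4}\,d\mu$ forces $A^{o}\equiv0$ on $\{A\ne0\}$ and trivially on $\{A=0\}$, so $f$ is totally umbilic; by the classical classification of totally umbilic surfaces in $\R^{n}$ each component is an open subset of a round sphere or of an affine plane, which is the claim. The main obstacle is the absorption in Stage~2: the Willmore reaction term supplies only the cubic factor $|A^{o}|^{2}|A|^{2}$ rather than $|A^{o}|^{4}$, so one cannot extract $\int|A^{o}|^{2}$ directly; the surplus $|A|^{2}$ must be traded, via Michael--Simon, for gradient terms \emph{already} appearing in $E(\gamma)$, and the argument closes only if the powers of $\gamma$ and $\nabla\gamma$ are tracked carefully and every constant is kept universal (independent of $f$, $\Sigma$, $\rho$). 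A secondary subtlety is the cutoff at the ends: the naive polynomial cutoff leaves the error $\rho^{-2}\int_{f^{-1}(B_{2\rho})}|A|^{2}$, which is controlled only through the $\liminf$ hypothesis, and the second-order cutoff contribution $\gamma^{2}|\nabla^{2}\gamma|^{2}|A|^{2}$ must be handled on the same footing.
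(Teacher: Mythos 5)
This theorem is quoted from Kuwert--Sch\"atzle \cite{KS01} and is not proved in the paper itself, but your two-stage scheme --- localized coercivity via interchange of covariant derivatives, Codazzi and a Simons-type identity, followed by absorption through the multiplicative Michael--Simon Sobolev inequalities under smallness of $\vn{A^o}_2^2$, with the cutoff sent to infinity --- is precisely the strategy of \cite{KS01} and the template the paper follows in Sections 3--4, so the proposal is correct in approach and in its use of the $\liminf$ hypothesis to kill the tail terms. The one point worth noting is that your cutoff error $\rho^{-2}\int_{f^{-1}(B_{2\rho})}|A|^2\,d\mu$ genuinely requires the $\liminf$ hypothesis, whereas the paper's sharper bookkeeping (Lemma \ref{LMlm5}, Proposition \ref{PPfinal}) reduces the error to $c\rho^{-4}\vn{A^o}^2_{2,[\gamma>0]}$, which vanishes as $\rho\to\infty$ from the energy hypothesis alone --- this is exactly why Theorem \ref{TMmt} dispenses with the growth condition at infinity that appears in the statement you proved.
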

In \cite{MW12helclass} the growth condition at infinity was removed.
Here we further improve this by including the case of surfaces with boundary.
Uniqueness theorems for Willmore surfaces with boundary are also known; see Palmer \cite{palmer} and the recent extension by Dall'Acqua \cite{annauniqueness}.
There techniques inspired by Bryant's seminal work \cite{bryant} and the Pohozaev identity are used to obtain uniqueness through the use of symmetry without resorting to any small energy assumption.
For the results in \cite{annauniqueness} to hold, the shape of the boundary and the topology of the base manifold $\Sigma$ must be specified a priori.
Furthermore, as they critically use the classification of Bryant \cite{bryant}, they are restricted to the case of one codimension.

Although the boundary conditions considered here and in the works of Palmer and Dall'Acqua are different, theorems \ref{TMmt} and \ref{TMmt} may nevertheless be viewed as complementing these results in the sense that it confirms
one may trade in symmetry and topological assumptions on the boundary and on the surface itself, as well as the restriction to codimension one, for a smallness condition on the tracefree second fundamental form in $L^2$.

{\it 2. Stationary solitons for the surface diffusion flow.} The surface diffusion flow is the steepest descent $H^{-1}$-gradient flow for the area functional.  An immersed surface is a stationary soliton for the flow if
\begin{equation}
\label{EQsurfacediffusion}
\nDelta\H = 0\,.
\end{equation}
The differential operator \eqref{EQsurfacediffusion} is the simplest example of the class of operators \eqref{EQgeneralpde} which we study, with $\tilde{a} = 1$ and trivially satisfying \eqref{EQa0growthonT} with $|\T| = 0$.
A result analogous to \cite[Theorem 3.2]{KS01} for surface diffusion flow was established in one codimension in \cite{sdflowtospheres,mythesis}.
Theorem \ref{TMmt} of this paper generalises these results to arbitrary codimension and to the case of surfaces with boundary.

The case of anisotropic surface diffusion flow has recently received some attention \cite{D12sd,DG11sd}, where one studies the steepest descent $H^{-1}$-gradient flow of the functional $\int_\Sigma a\,d\mu$.
Stationary solitons for this flow are also covered by our theorems.

{\it 3. Biharmonic surfaces.} An immersed surface is termed \emph{biharmonic} or \emph{biharmonic in the sense of Chen} if
\begin{equation}
\label{EQbiharmonic}
\Delta\H = 0
\end{equation}
where $\Delta$ is the metric (or rough) Laplacian.
The normal component of this equation must vanish if \eqref{EQbiharmonic} is satisfied; that is, 
\begin{equation}
\label{EQbiharmonicNormal}
\nDelta\H = H^\beta A_{ij}^\alpha A_{ij}^\beta \nu^\alpha\,.
\end{equation}
Classification questions for biharmonic surfaces have a rich history, motivated primarily by the
study of Chen's conjecture \cite{Ch91}, which claims that all biharmonic submanifolds of
Euclidean space are minimal.
It is easy to check that while round spheres clearly satisfy both \eqref{EQwillmore} and
\eqref{EQsurfacediffusion}, they do \emph{not} satisfy \eqref{EQbiharmonicNormal}.
They certainly satisfy the condition \eqref{EQagrowthonT} with $q=0$, and so Theorem \ref{TMmt2} gives uniqueness for biharmonic surfaces with $A$ small in $L^2$.

\begin{rmk}
Chen's conjecture for biharmonic submanifolds claims that submanifolds of $\R^n$ with $\Delta\H = 0$ are harmonic.
Here we have proven that surfaces immersed in $\R^n$ satisfying $\Delta\H = 0$ with flat boundary and $\vn{A}_2^2 < \varepsilon$ are \emph{flat}.
This is the first progress on Chen's conjecture for surfaces with boundary.
\end{rmk}

%}}}

\section{Local estimates for $\vSG$ from below in $L^2$}
%{{{

%{{{

The localisation we shall use is the function 
\begin{equation}
\label{EQgamma}
\gamma(p) = (\tilde\gamma\circ f)(p),\quad \gamma(p)\in[0,1]\,,
\end{equation}
for $p\in\Sigma$ and $\tilde\gamma$ a $C^1$ function on $\R^n$ with compact
support. Suppose $||\nabla\gamma||_\infty \le \frac{1}\rho$ for some $\rho$
depending on $\tilde\gamma$ to be set later.

\begin{lem}
\label{LMlm1}
Suppose $f:\Sigma\rightarrow\R^n$ is an immersed surface with boundaries satisfying \eqref{EQbc}, and $\gamma$ is a function as in \eqref{EQgamma}.
Then
\begin{align*}
\int_\Sigma &\big(|\nnabla_{(2)}\H|^2 + |\H|^2|\nnabla\H|^2\big)\gamma^4\,d\mu
\\
&\le c\int_\Sigma |\nDelta\H|^2\gamma^4\,d\mu
   + c\int_\Sigma |A^o|^2|\nnabla\H|^2\gamma^4\,d\mu
   + \frac{c}{\rho^2}\int_\Sigma |\nnabla A^o|^2\gamma^2d\mu
\,,
\end{align*}
where $c$ depends only on $n$ and $\rho$ depends only on $\tilde\gamma$.
\end{lem}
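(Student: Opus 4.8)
The plan is to obtain the stated lower-bound estimate on the leading-order term $\nDelta\H$ by testing the operator against an appropriate multiple of $\H$, integrating by parts twice, and carefully tracking the resulting boundary integrals so that the umbilic boundary conditions \eqref{EQbc} force them to vanish or to be controlled. Concretely, I would start from the pointwise identity $|\nnabla_{(2)}\H|^2 = \langle \nnabla_{(2)}\H, \nnabla_{(2)}\H\rangle$ and integrate $\int_\Sigma |\nnabla_{(2)}\H|^2\gamma^4\,d\mu$ by parts once in each derivative slot to move two derivatives onto $\H$, producing a term $\int_\Sigma |\nDelta\H|^2\gamma^4\,d\mu$ (after commuting covariant derivatives — this introduces curvature terms via the Ricci/Gauss equations, which on a surface are quadratic in $A$) plus terms where derivatives land on $\gamma^4$, which are lower order in $\gamma$ and supply the $\rho^{-2}$ factor by the assumed bound $\|\nabla\gamma\|_\infty\le\rho^{-1}$. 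Then I would absorb the "good" second-derivative term back to the left-hand side using Young's inequality.

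**Next**, the term $\int_\Sigma |\H|^2|\nnabla\H|^2\gamma^4\,d\mu$ should be handled similarly: integrate by parts to write it (up to controllable errors) in terms of $\int |\nDelta\H|^2\gamma^4$ and $\int |\H|^2|\nnabla_{(2)}\H|\cdots$, using the splitting $A = A^o + \frac12 g\otimes\H$ (in dimension two) to rewrite $|\H|^2$-weighted quantities in terms of $|A^o|^2$-weighted quantities plus full-curvature terms that get absorbed. The appearance of $|A^o|^2|\nnabla\H|^2$ rather than $|A|^2|\nnabla\H|^2$ on the right-hand side is the reason one must be careful to use the trace decomposition: every time a factor of $|A|^2$ or $|\H|^2$ would naively appear multiplying $|\nnabla\H|^2$, one rewrites it via $|\H|^2 \le 2|A^o|^2 + (\text{const})|\H|^2$... — more precisely, one uses that $|\nnabla\H|$ and $|\nnabla A^o|$ together control $|\nnabla A|$ and conversely, and that $|A|^2 \le |A^o|^2 + |\H|^2$, then moves the $|\H|^2|\nnabla\H|^2$ piece to the left-hand side where it already sits. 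So the structure of the estimate is: put both $|\nnabla_{(2)}\H|^2\gamma^4$ and $|\H|^2|\nnabla\H|^2\gamma^4$ on the left, and arrange the integration by parts so that all error terms are one of: $|\nDelta\H|^2\gamma^4$, $|A^o|^2|\nnabla\H|^2\gamma^4$, or $\rho^{-2}|\nnabla A^o|^2\gamma^2$ (the last coming from derivatives falling on the cutoff, where one crucially uses $\nnabla A = \nnabla A^o$ up to $g\otimes\nnabla\H$, and the $|\nnabla\H|$ part is absorbed).

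**The main obstacle** will be the boundary terms. Each integration by parts over $\Sigma$ produces a boundary integral over $\partial\Sigma$ involving $\H$, $\nnabla\H$, $\nnabla_{(2)}\H$ and the conormal $\eta$, contracted with powers of $\gamma$. These do not individually vanish — $\H$ need not be zero on $\partial\Sigma$ under umbilic conditions — so the delicate point is to organise the two integrations by parts so that the surviving boundary terms assemble into expressions proportional to $A^o$ or $\nnabla A^o$ along $\partial\Sigma$, which vanish by \eqref{EQbc}. The key tool is again the decomposition $A = A^o + \frac12 g\otimes\H$: on the boundary $A^o = 0$ means $A = \frac12 g\otimes\H$, and $\nnabla A^o = 0$ means $\nnabla A = \frac12 g\otimes\nnabla\H$, so the boundary geometry is completely determined by $\H$ and $\nnabla\H$ there — and identities relating $\nnabla_{(2)}\H|_{\partial\Sigma}$, the boundary's second fundamental form, and $\nnabla A$ restricted to $\partial\Sigma$ should let one rewrite the boundary terms in a manifestly-vanishing form. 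I would handle this by choosing the integration-by-parts scheme (i.e. which derivative to integrate first, and whether to test against $\H\gamma^4$ or $\nDelta(\H\gamma^4)$ etc.) precisely so that no uncontrolled boundary term survives; this is where "the umbilic boundary conditions are critical," as the introduction flags. Once the boundary terms are dispatched and the good terms absorbed via Young's inequality with a small constant, collecting everything yields the claimed inequality with $c = c(n)$.
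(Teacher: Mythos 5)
Your overall scheme (integrate by parts, commute derivatives, use Codazzi and the umbilic boundary conditions to kill boundary terms) points in the right direction, but it is missing the one identity on which the whole lemma rests, and without it the absorption you describe does not close. The paper's proof starts from the interchange formula
\[
\nnabla\nDelta\H = \nDelta\nnabla\H - \tfrac14|\H|^2\nnabla\H + A\ast A^o\ast\nnabla\H,
\]
tests it against $\nnabla\H\,\gamma^4$, and integrates by parts on both sides. The decisive point is the \emph{exact structure} of the curvature term: the part proportional to the full second fundamental form appears as $-\frac14|\H|^2\nnabla\H$ with a favourable sign, so that after testing it contributes $+\frac14\int_\Sigma|\H|^2|\nnabla\H|^2\gamma^4\,d\mu$ to the left-hand side --- this is the \emph{only} source of that term in the lemma --- while the remainder carries at least one factor of $A^o$ and is therefore estimated by $\delta\int|\H|^2|\nnabla\H|^2\gamma^4 + c_\delta\int|A^o|^2|\nnabla\H|^2\gamma^4$ and absorbed. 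Your proposal treats the commutator terms as generic errors ``quadratic in $A$'' and then suggests either (a) rewriting $|A|^2$ via the trace decomposition and ``moving the $|\H|^2|\nnabla\H|^2$ piece to the left-hand side where it already sits'', or (b) deriving the $|\H|^2|\nnabla\H|^2$ bound by a separate integration by parts. Neither works as stated: for (a), a term $+c|A|^2|\nnabla\H|^2$ on the right with an unspecified constant cannot be absorbed into $+\frac14|\H|^2|\nnabla\H|^2$ on the left unless you know the curvature term is literally $-\frac14|\H|^2\nnabla\H + A\ast A^o\ast\nnabla\H$; for (b), integrating $\int|\H|^2|\nnabla\H|^2\gamma^4$ by parts on its own produces sixth-order curvature quantities such as $\int|\H|^3|\nDelta\H|\gamma^4$, which are only controlled later (Lemma \ref{LMlm2} and Corollary \ref{CY1}), not at this stage.

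The boundary terms are also simpler than you anticipate. Every boundary integral produced carries a factor of $\nnabla\H$ (namely $\int_{\partial\Sigma}(\nDelta\H)\cdot(\nnabla_\nu\H)\gamma^4\,d\mu^\partial$ and $\frac12\int_{\partial\Sigma}\nnabla_\nu|\nnabla\H|^2\gamma^4\,d\mu^\partial$), and the traced Codazzi identity $\nnabla_j\H = 2\nnabla_i(A^o)^i_j$ shows that $\nnabla A^o = 0$ on $\partial\Sigma$ forces $\nnabla\H = 0$ there, so both vanish outright; no identities for $\nnabla_{(2)}\H$ restricted to the boundary, and no use of $A^o=0$, are needed. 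Your remark that ``$\nnabla A^o = 0$ means $\nnabla A = \frac12 g\otimes\nnabla\H$'' does not by itself give $\nnabla\H=0$ on the boundary --- you need the traced Codazzi relation. The $\rho^{-2}\int|\nnabla A^o|^2\gamma^2$ term on the right does come from the cutoff cross terms via $|\nnabla\H|\le c|\nnabla A^o|$, which you gesture at correctly.
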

\begin{proof}%{{{
Interchange of covariant derivatives and the Codazzi equation gives the standard formula
\begin{equation}
\label{EQ1}
\nnabla\nDelta\H = \nDelta\nnabla\H - \frac14 |\H|^2\nnabla\H + A*A^o*\nnabla\H\,.
\end{equation}
In the equation above we have denoted by $*$ contraction with the metric $g$ and possible multiplication by a constant.
Integrating \eqref{EQ1} against $\nnabla \H\,\gamma^4$ yields
\begin{align}
\int_\Sigma \IP{\nnabla\nDelta \H}{\nnabla \H}_g \gamma^4 d\mu
 &=
    \int_\Sigma \IP{\nDelta\nnabla \H}{\nnabla \H}_g \gamma^4 d\mu
\notag
\\
\label{EQ2}
&\hskip-2cm
  - \frac14 \int_\Sigma |\H|^2|\nnabla \H|^2\gamma^4d\mu
  + \int_\Sigma A*A^o*\nnabla \H*\nnabla \H\, \gamma^4d\mu.
\end{align}
Using the divergence theorem we have
\begin{align}
\int_\Sigma \IP{\nnabla\nDelta \H}{\nnabla \H}_g \gamma^4 d\mu
 &= -\int_\Sigma |\nDelta \H|^2\gamma^4d\mu - 4\int_\Sigma \nDelta \H \IP{\nnabla \H}{\nabla\gamma}_g\gamma^3d\mu
\notag
\\&\quad + \int_{\partial\Sigma} (\nDelta \H)\cdot(\nnabla_\nu \H) \gamma^4 d\mu\,,
\label{EQ3}
\end{align}
where $\nu$ is the outward normal to $\partial\Sigma$.
From the Codazzi equation it follows that
\begin{equation}
\label{EQ4}
\nnabla_j\H = 2\nnabla_i (A^o)^i_j := 2(\nnabla \ast A^o)_j\,,
\end{equation}
where we have slightly abused notation and used $\ast$ as shorthand for the specific metric divergence operation above.
Since $\nabla A^o = 0$ on $\partial\Sigma$, we have that $\nabla_\nu\H = 0$ on $\partial\Sigma$ and so the boundary term
in \eqref{EQ3} vanishes.
Applying the divergence theorem once more we find
\begin{align}
\int_\Sigma \IP{\nDelta\nnabla \H}{\nnabla \H}_g \gamma^4 d\mu
 &= -\int_\Sigma |\nnabla_{(2)}\H|^2\gamma^4d\mu - 4\int_\Sigma \IP{\nnabla_{(2)} \H}{\nabla\gamma\nnabla \H}_g\gamma^3d\mu
\notag
\\&\quad + \frac12\int_{\partial\Sigma} \nnabla_\nu|\nnabla \H|^2 \gamma^4 d\mu^\partial\,.
\label{EQ5}
\end{align}
Note that the boundary term again vanishes due to equation \eqref{EQ4}.
Combining \eqref{EQ3}, \eqref{EQ5} with \eqref{EQ2} we obtain
\begin{align*}
  \int_\Sigma |\nnabla_{(2)}\H|^2\gamma^4d\mu
&+ \frac14 \int_\Sigma |\H|^2|\nnabla \H|^2\gamma^4d\mu
=
    \int_\Sigma |\nDelta \H|^2\gamma^4d\mu
\\
&
  + \int_\Sigma \nnabla_{(2)} \H*\nnabla \H*\nabla \gamma\ \gamma^3d\mu
  + \int_\Sigma A*A^o*\nnabla \H*\nnabla \H\ \gamma^4d\mu
\\
&\le
    \int_\Sigma |\nDelta \H|^2\gamma^4d\mu
  + \frac12\int_\Sigma |\nnabla_{(2)}\H|^2\gamma^4d\mu
\\
&\quad
  + \frac{c}{\rho^2}\int_\Sigma |\nnabla A^o|^2 \gamma^2d\mu
  + \int_\Sigma (A^o+g\H)*A^o*\nnabla \H*\nnabla \H\ \gamma^4d\mu
\\
&\le
    \int_\Sigma |\nDelta \H|^2\gamma^4d\mu
  + \frac12\int_\Sigma |\nnabla_{(2)}\H|^2\gamma^4d\mu
  + \frac18\int_\Sigma |\H|^2|\nnabla \H|^2\gamma^4d\mu
\\
&\quad
  + \frac{c}{\rho^2}\int_\Sigma |\nnabla A^o|^2 \gamma^2d\mu
  + c\int_\Sigma |A^o|^2|\nnabla \H|^2 \gamma^4d\mu\,.
\end{align*}
Absorbing finishes the proof.
\end{proof}%}}}

\begin{lem}
\label{LMlm2}
Suppose $f:\Sigma\rightarrow\R^n$ is an immersed surface with boundaries satisfying \eqref{EQbc}, and $\gamma$ is a function as in \eqref{EQgamma}.
Then
\begin{align*}
\int_\Sigma &\big(|\H|^4|A^o|^2 + |\H|^2|\nnabla A^o|^2\big)\gamma^4\,d\mu
\\
&\le c\int_\Sigma |\H|^2|\nnabla\H|^2\gamma^4\,d\mu
   + c\int_\Sigma |A^o|^2|\nnabla A^o|^2\gamma^4\,d\mu
   + c\int_\Sigma |A^o|^6\gamma^4\,d\mu
  + \frac{c}{\rho^4}\int_{[\gamma>0]}|A^o|^2d\mu
\,,
\end{align*}
where $c$ depends only on $n$ and $\rho$ depends only on $\tilde\gamma$.
\end{lem}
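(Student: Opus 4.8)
The plan is to integrate a Simons-type identity for the tracefree second fundamental form against the weighted test tensor $|\H|^2A^o\gamma^4$, to evaluate the resulting integral in two different ways, and to compare. As in Lemma \ref{LMlm1}, the boundary conditions \eqref{EQbc} enter only through $|A^o|=0$ on $\partial\Sigma$, which annihilates every boundary term produced below (each such term carries an undifferentiated factor of $A^o$). The identity I would use is
\[
\nDelta A^o_{ij} = \nnabla_i\nnabla_j\H - \tfrac12(\nDelta\H)\,g_{ij} + \big(A^o\ast A\ast A\big)_{ij},
\]
obtained from the Codazzi identity \eqref{EQ4} by commuting covariant derivatives and invoking the Gauss and Ricci equations to replace the ambient-flat curvature commutators by quadratic expressions in $A$; the cubic term carries a factor of $A^o$ because on umbilic configurations ($A^o=0$) the commutator terms are pure-trace in the free indices and hence drop out of the tracefree identity. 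The one extra feature needed is that the $|\H|^2A^o$-part of the cubic term has a \emph{positive} coefficient: writing $A=A^o+\tfrac12 g\H$ and using the two-dimensional identity $(A^o)_{ik}(A^o)^k{}_j=\tfrac12|A^o|^2g_{ij}$, one finds in codimension one that the cubic term equals $\tfrac12|\H|^2A^o-|A^o|^2A^o$.

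Set $X:=\int_\Sigma|\H|^2\IP{\nDelta A^o}{A^o}\gamma^4\,d\mu$. Integrating by parts directly gives
\[
X = -\int_\Sigma|\H|^2|\nnabla A^o|^2\gamma^4\,d\mu - \int_\Sigma\nnabla_k(|\H|^2)\,\IP{\nnabla^k A^o}{A^o}\,\gamma^4\,d\mu - 4\int_\Sigma|\H|^2\,\IP{\nnabla^k A^o}{A^o}\,\nnabla_k\gamma\,\gamma^3\,d\mu,
\]
with no boundary term. Substituting the Simons-type identity and integrating by parts once more in the Hessian term (again with no boundary term, the surviving factor being $A^o$) gives
\[
X = -\tfrac12\int_\Sigma|\H|^2|\nnabla\H|^2\gamma^4\,d\mu + \int_\Sigma|\H|^2\,\IP{A^o\ast A\ast A}{A^o}\,\gamma^4\,d\mu + R,
\]
where $R$ gathers all terms carrying either $\nnabla(|\H|^2)=2\IP{\nnabla\H}{\H}$ or $\nnabla\gamma$. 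In the cubic integral the $|\H|^2A^o$-part contributes $c_\ast\int_\Sigma|\H|^4|A^o|^2\gamma^4\,d\mu$ with $c_\ast>0$, and the remaining parts are bounded by $\varepsilon\int_\Sigma|\H|^4|A^o|^2\gamma^4\,d\mu+c\int_\Sigma|A^o|^6\gamma^4\,d\mu$ via Young's inequality and the pointwise bound $|A|\le|A^o|+|\H|$ (discarding a term of one sign in codimension one).

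Equating the two expressions for $X$ and rearranging moves $c_\ast\int|\H|^4|A^o|^2\gamma^4$ to the left alongside $\int|\H|^2|\nnabla A^o|^2\gamma^4$, and leaves the error terms from $R$ and from the first computation. Each term containing $\nnabla(|\H|^2)$ is split by Young's inequality into a small multiple of $|\H|^2|\nnabla A^o|^2+|\H|^2|\nnabla\H|^2$ plus a multiple of $|A^o|^2|\nnabla\H|^2$, and the latter is then converted into $|A^o|^2|\nnabla A^o|^2$ by the pointwise bound $|\nnabla\H|\le c|\nnabla A^o|$ coming from \eqref{EQ4}; each term containing $\nnabla\gamma$ carries a factor $\rho^{-1}$, and throwing it (by Young's inequality) onto an $|A^o|$-factor turns that term into a sum of terms of the types already present on the right, together with — after restricting to $[\gamma>0]$ and one further use of Young's inequality against the left-hand term $|\H|^4|A^o|^2\gamma^4$ — precisely $\tfrac c{\rho^4}\int_{[\gamma>0]}|A^o|^2\,d\mu$. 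Choosing $\varepsilon$ and the remaining free parameters small and absorbing into the left-hand side finishes the proof, with $c=c(n)$.

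The only genuinely delicate point — the rest being routine integration by parts and Young's inequality — is the positivity of $c_\ast$: if the $|\H|^2A^o$-part of the cubic Simons term had a negative coefficient, then $\int|\H|^4|A^o|^2\gamma^4$ would end up on the right-hand side of the equated identity with an uncontrollable sign, and there is no $\int|\H|^6$-type term available on the right to absorb it. In codimension one this is the short computation indicated above; in general codimension one must verify that substituting $\tfrac12 g\H$ for two of the three factors of $A$ in the curvature-commutator terms and passing to the tracefree part does not reverse the sign, which ultimately reflects the fact that umbilic surfaces have Gauss curvature $\tfrac14|\H|^2\ge0$ and vanishing normal curvature.
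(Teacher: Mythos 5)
Your proposal is correct and follows essentially the same route as the paper: test Simons' identity for $\nDelta A^o$ against $|\H|^2A^o\gamma^4$, kill every boundary term using $|A^o|=0$ on $\partial\Sigma$, convert the Hessian term into $\tfrac12\int|\H|^2|\nnabla\H|^2\gamma^4$ via the Codazzi relation \eqref{EQ4}, keep the $\tfrac12\int|\H|^4|A^o|^2\gamma^4$ term (whose favourable sign you rightly single out as the crux), and absorb the remaining errors with Young's inequality, trading the $\nabla\gamma$ terms against $|\H|^4|A^o|^2\gamma^4$ to produce the $\rho^{-4}$ tail. Your one hedge --- the sign of the cubic term in general codimension --- is settled by the paper's stated form \eqref{EQ6}, $\nDelta A^o = S^o(\nnabla_{(2)}\H)+\tfrac12|\H|^2A^o+A^o\ast A^o\ast A^o$, whose remainder is cubic in $A^o$ alone and hence controlled by $\varepsilon|\H|^4|A^o|^2+c|A^o|^6$ exactly as you do.
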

\begin{proof}%{{{
Simons' identity implies
\begin{equation}
\label{EQ6}
\nDelta A^o = S^o(\nnabla_{(2)}\H) + \frac12|\H|^2A^o + A^o*A^o*A^o\,,
\end{equation}
where $S^o(B)$ denotes the tracefree part of the symmetric bilinear form $B$.
Integrating \eqref{EQ6} against $|\H|^2A^o$ we obtain
\begin{align*}
\int_\Sigma &|\H|^2|\nnabla A^o|^2\gamma^4d\mu
- \int_{\partial\Sigma} |\H|^2\IP{\nnabla_\nu A^o}{A^o}\gamma^4d\mu^\partial
\\
&= - \int_\Sigma |\H|^2\IP{A^o}{\nDelta A^o}_g\gamma^4d\mu
\\
&\qquad
   - 2\int_\Sigma \IP{\nnabla A^o}{\H\cdot\nnabla \H\, A^o}_g\gamma^4d\mu
   - 4\int_\Sigma |\H|^2\IP{\nnabla A^o}{\nabla\gamma\, A^o}_g\gamma^3d\mu
\\
&= - \int_\Sigma |\H|^2\IP{A^o}{S^o(\nnabla_{(2)}\H) + \frac12|\H|^2A^o + A^o*A^o*A^o}_g\gamma^4d\mu
\\
&\qquad
   - 2\int_\Sigma \IP{\nnabla A^o}{\H\cdot\nnabla H\, A^o}_g\gamma^4d\mu
   - 4\int_\Sigma |\H|^2\IP{\nnabla A^o}{\nabla\gamma\, A^o}_g\gamma^3d\mu
\\
&= - \int_\Sigma |\H|^2\IP{A^o}{\nnabla_{(2)}\H}_g\gamma^4d\mu
   - \frac12\int_\Sigma |\H|^4|A^o|^2\gamma^4d\mu
   + \int_\Sigma |\H|^2A^o*A^o*A^o*A^o\,\gamma^4d\mu
\\
&\qquad
   - 2\int_\Sigma \IP{\nnabla A^o}{\H\cdot\nnabla H\, A^o}_g\gamma^4d\mu
   - 4\int_\Sigma |\H|^2\IP{\nnabla A^o}{\nabla\gamma\, A^o}_g\gamma^3d\mu
\\
&=  \int_\Sigma |\H|^2\IP{\nnabla\ast A^o}{\nnabla\H}_g\gamma^4d\mu
  - \int_{\partial\Sigma} |\H|^2\IP{A^o(\nu,\cdot)}{\nnabla\H}\gamma^4d\mu^\partial
  + 2\int_\Sigma \IP{A^o}{(\H\cdot\nnabla\H)\nnabla\H}_g\gamma^4d\mu
\\
&\qquad
   - \frac12\int_\Sigma |\H|^4|A^o|^2\gamma^4d\mu
   + \int_\Sigma |\H|^2A^o*A^o*A^o*A^o\gamma^4d\mu
   - 2\int_\Sigma \IP{\nnabla A^o}{\H\cdot\nnabla H\, A^o}_g\gamma^4d\mu
\\
&\qquad
   - 4\int_\Sigma |\H|^2\IP{\nnabla A^o}{\nabla\gamma\, A^o}_g\gamma^3d\mu
   + 4\int_\Sigma |\H|^2\IP{A^o}{\nabla\gamma\nnabla \H}_g\gamma^3d\mu\,.
\end{align*}
Noting \eqref{EQbc} and \eqref{EQ4} we estimate the right hand side to obtain for $\delta > 0$
\begin{align*}
\int_\Sigma &|\H|^2|\nnabla A^o|^2\gamma^4d\mu
   + \frac12\int_\Sigma |\H|^4|A^o|^2\gamma^4d\mu
\\
&=  \frac12\int_\Sigma |\H|^2|\nnabla \H|^2\gamma^4d\mu
  + 2\int_\Sigma \IP{A^o}{(\H\cdot\nnabla\H)\nnabla\H}_g\gamma^4d\mu
\\
&\qquad
   + \int_\Sigma |\H|^2A^o*A^o*A^o*A^o\gamma^4d\mu
   - 2\int_\Sigma \IP{\nnabla A^o}{\H\cdot\nnabla H\, A^o}_g\gamma^4d\mu
\\
&\qquad
   - 4\int_\Sigma |\H|^2\IP{\nnabla A^o}{\nabla\gamma\, A^o}_g\gamma^3d\mu
   + 4\int_\Sigma |\H|^2\IP{A^o}{\nabla\gamma\nnabla \H}_g\gamma^3d\mu
\\
&\le \Big(\frac12+\delta\Big)\int_\Sigma |\H|^2|\nnabla \H|^2\gamma^4d\mu
  + \frac{c}{\delta}\int_\Sigma |A^o|^2|\nnabla \H|^2\gamma^4d\mu
\\
&\qquad
  + \delta\int_\Sigma |\H|^4|A^o|^2\gamma^4d\mu
  + \frac{c}{\delta}\int_\Sigma \big(|A^o|^6 + |A^o|^2|\nnabla A^o|^2\big)\gamma^4d\mu
\\
&\qquad
  + \delta\int_\Sigma |\H|^2|\nnabla A^o|^2\gamma^4d\mu
  + \frac{c}{\rho^2\delta}\int_\Sigma |\H|^2|A^o|^2\gamma^2d\mu
\\
&\le \Big(\frac12+\delta\Big)\int_\Sigma |\H|^2|\nnabla \H|^2\gamma^4d\mu
  + \delta\int_\Sigma |\H|^2|\nnabla A^o|^2\gamma^4d\mu
\\
&\qquad
  + \delta\int_\Sigma |\H|^4|A^o|^2\gamma^4d\mu
  + \frac{c}{\delta}\int_\Sigma \big(|A^o|^6 + |A^o|^2|\nnabla A^o|^2\big)\gamma^4d\mu
\\
&\qquad
  + \frac{c}{\rho^4\delta^3}\int_{[\gamma>0]}|A^o|^2d\mu\,.
\end{align*}
Absorbing the second and third terms from the right hand side into the left finishes the proof.
\end{proof}%}}}

\begin{cor}
\label{CY1}
Suppose $f:\Sigma\rightarrow\R^n$ is an immersed surface with boundaries satisfying \eqref{EQbc}, and $\gamma$ is a function as in \eqref{EQgamma}.
Then
\begin{align*}
\int_\Sigma &\big(|A|^4|A^o|^2 + |A|^2|\nnabla A|^2\big)\gamma^4\,d\mu
\\
&\le c\int_\Sigma |\H|^2|\nnabla\H|^2\gamma^4\,d\mu
   + c\int_\Sigma |A^o|^2|\nnabla A^o|^2\gamma^4\,d\mu
   + c\int_\Sigma |A^o|^6\gamma^4\,d\mu
  + \frac{c}{\rho^4}\int_{[\gamma>0]}|A^o|^2d\mu
\,,
\end{align*}
where $c$ depends only on $n$ and $\rho$ depends only on $\tilde\gamma$.
\end{cor}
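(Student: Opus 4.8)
The plan is to deduce the corollary from Lemma \ref{LMlm2} by decomposing $A$ into its tracefree part and its trace. Since $\Sigma$ is two dimensional we have the pointwise splittings $A = A^o + \tfrac12\,g\,\H$ and, because $g$ is parallel, $\nnabla A = \nnabla A^o + \tfrac12\,g\,\nnabla\H$. Taking norms and using that $A^o$ is tracefree to kill the cross terms gives $|A|^2 = |A^o|^2 + \tfrac12|\H|^2$ and $|\nnabla A|^2 = |\nnabla A^o|^2 + \tfrac12|\nnabla\H|^2$. Substituting these into the left-hand side of the corollary, the integrand $|A|^4|A^o|^2 + |A|^2|\nnabla A|^2$ is bounded by a dimensional constant times
\[
|A^o|^6 + |\H|^2|A^o|^4 + |\H|^4|A^o|^2 + |A^o|^2|\nnabla A^o|^2 + |A^o|^2|\nnabla\H|^2 + |\H|^2|\nnabla A^o|^2 + |\H|^2|\nnabla\H|^2\,.
\]

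Next I would clean up the two terms in this list that are not already present on the right-hand side of the corollary, namely $|\H|^2|A^o|^4$ and $|A^o|^2|\nnabla\H|^2$. For the first, Young's inequality gives $|\H|^2|A^o|^4 \le \tfrac12|\H|^4|A^o|^2 + \tfrac12|A^o|^6$. For the second, the Codazzi identity \eqref{EQ4}, $\nnabla_j\H = 2(\nnabla\ast A^o)_j$, yields $|\nnabla\H| \le c\,|\nnabla A^o|$ and hence $|A^o|^2|\nnabla\H|^2 \le c\,|A^o|^2|\nnabla A^o|^2$. After these substitutions the integrand is controlled by a constant times $|A^o|^6 + |A^o|^2|\nnabla A^o|^2 + |\H|^2|\nnabla\H|^2$ together with the pair $|\H|^4|A^o|^2 + |\H|^2|\nnabla A^o|^2$.

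Finally, integrating against $\gamma^4$, the quantity $\int_\Sigma\big(|\H|^4|A^o|^2 + |\H|^2|\nnabla A^o|^2\big)\gamma^4\,d\mu$ is exactly the left-hand side of Lemma \ref{LMlm2}, so that lemma bounds it by
\[
c\int_\Sigma|\H|^2|\nnabla\H|^2\gamma^4\,d\mu + c\int_\Sigma|A^o|^2|\nnabla A^o|^2\gamma^4\,d\mu + c\int_\Sigma|A^o|^6\gamma^4\,d\mu + \frac{c}{\rho^4}\int_{[\gamma>0]}|A^o|^2\,d\mu\,,
\]
and the three remaining integrals $\int_\Sigma|A^o|^6\gamma^4\,d\mu$, $\int_\Sigma|A^o|^2|\nnabla A^o|^2\gamma^4\,d\mu$ and $\int_\Sigma|\H|^2|\nnabla\H|^2\gamma^4\,d\mu$ are already among the terms on the right-hand side of the corollary. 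Summing up delivers the claim. I do not expect a genuine obstacle here; the only point that needs care is that the algebraic split $A \leftrightarrow (A^o,\H)$ generates the two ``extra'' terms $|\H|^2|A^o|^4$ and $|A^o|^2|\nnabla\H|^2$, which must be traded --- by Young's inequality and by the Codazzi relation \eqref{EQ4} respectively --- before Lemma \ref{LMlm2} can be invoked verbatim.
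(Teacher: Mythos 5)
Your proposal is correct and follows essentially the same route as the paper: both reduce the corollary to Lemma \ref{LMlm2} via the trace decomposition $A = A^o + \tfrac12 g\H$ together with the Codazzi relation (the paper uses \eqref{EQ7}, you use \eqref{EQ4}, which amount to the same control of $|\nnabla A|$ by $|\nnabla A^o|$). The only cosmetic difference is that you run the algebra from the $|A|$-side using the exact orthogonal splittings $|A|^2 = |A^o|^2 + \tfrac12|\H|^2$ and $|\nnabla A|^2 = |\nnabla A^o|^2 + \tfrac12|\nnabla\H|^2$, whereas the paper bounds the $|\H|$-expression from below; both yield the same estimate.
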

\begin{proof}%{{{
Codazzi implies
\begin{equation}
\label{EQ7}
|\nnabla_{(k)} A| \le c|\nnabla_{(k)} A^o|
\end{equation}
for any integer $k\ge1$. Noting also the decomposition $A = A^o + \frac12 g\H$ we compute
\begin{align*}
|\H|^4|A^o|^2 + |\H|^2|\nnabla A^o|^2
 &= |2A - 2A^o|^4|A^o|^2 + |2A-2A^o|^2|\nnabla A^o|^2
\\&= 16(|A|^2-2\IP{A}{A^o}_g + |A^o|^2)^2|A^o|^2 + 4(|A|^2-2\IP{A}{A^o}_g + |A^o|^2)|\nnabla A^o|^2
\\&\ge 8(|A|^2 - 2|A^o|^2)^2|A^o|^2 + 2(|A|^2 - 2|A^o|^2)|\nnabla A^o|^2
\\&\ge 8(|A|^4 - 4|A|^2|A^o|^2 + 4|A^o|^4)|A^o|^2 + 2(|A|^2 - 2|A^o|^2)|\nnabla A^o|^2
\\&\ge 4(|A|^4 - 8|A^o|^4)|A^o|^2 + 2(|A|^2 - 2|A^o|^2)|\nnabla A^o|^2\,.
\end{align*}
Summarising,
\begin{equation}
\label{EQ8}
4|A|^4|A^o|^2 + 2|A|^2|\nnabla A^o|^2
\le |\H|^4|A^o|^2 + |\H|^2|\nnabla A^o|^2
  + 32|A^o|^6 + 4|A^o|^2|\nnabla A^o|^2\,.
\end{equation}
Combining \eqref{EQ8} with Lemma \ref{LMlm2} and \eqref{EQ7} we find
\begin{align*}
4\int_\Sigma |A|^4|A^o|^2 \gamma^4d\mu + 2\int_\Sigma |A|^2|\nnabla A|^2 \gamma^4d\mu
&\le
4\int_\Sigma |A|^4|A^o|^2 \gamma^4d\mu + c\int_\Sigma |A|^2|\nnabla A^o|^2 \gamma^4d\mu
\\
&\le
 c\int_\Sigma \big(|\H|^4|A^o|^2 + |\H|^2|\nnabla A^o|^2\big)\gamma^4\,d\mu
\\&\quad
  + c\int_\Sigma |A^o|^6 \gamma^4d\mu + 4\int_\Sigma |A^o|^2|\nnabla A^o|^2 \gamma^4 d\mu
\\
&\le c\int_\Sigma |\H|^2|\nnabla\H|^2\gamma^4\,d\mu
   + c\int_\Sigma |A^o|^2|\nnabla A^o|^2\gamma^4\,d\mu
\\&\quad
   + c\int_\Sigma |A^o|^6\gamma^4\,d\mu
  + \frac{c}{\rho^4}\int_{[\gamma>0]}|A^o|^2d\mu\,.
\end{align*}

\end{proof}%}}}

Our argument now proceeds in two directions: one, for operators $\vSG$ with reaction terms satisfying \eqref{EQa0growthonT}, the other for operators
$\vSG$ with reaction terms satisfying \eqref{EQagrowthonT}.

\begin{lem}
\label{LMlm3}
Suppose $f:\Sigma\rightarrow\R^n$ is an immersed surface with boundaries satisfying \eqref{EQbc}, and $\gamma$ is a function as in \eqref{EQgamma}.
Assume the operator $\vSG$ is of the form \eqref{EQgeneralpde} satisfying \eqref{EQanisotropypositive} and \eqref{EQa0growthonT}.
Then
\begin{align*}
\int_\Sigma &\big(|\nnabla_{(2)}\H|^2 + |A|^2|\nnabla A|^2 + |A|^4|A^o|^2\big)\gamma^4\,d\mu
\\
&\le c\int_\Sigma |\vSG|^2\gamma^4\,d\mu
   + c\int_\Sigma \big(|A^o|^6 + |\nnabla A^o|^2|A^o|^2\big)\gamma^4\,d\mu
   + \frac{c}{\rho^2}\int_\Sigma |\nnabla A^o|^2\gamma^2d\mu
  + \frac{c}{\rho^4}\int_{[\gamma>0]}|A^o|^2d\mu
\,,
\end{align*}
where $c$ depends only on $n$, $a_0$, $c_0$, and $\rho$ depends only on $\tilde\gamma$.
\end{lem}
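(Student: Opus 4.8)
The plan is to start from the defining relation $\vSG(f) = a\nnabla \H + \T$, so that $\nnabla\nnabla$... wait, more precisely $a\nnabla \H = \vSG - \T$ hence $|\nnabla\H|^2 \le \frac{2}{a_0^2}(|\vSG|^2 + |\T|^2)$ pointwise after dividing by $a \ge a_0 > 0$. This immediately converts any occurrence of $\int |\nnabla\H|^2\gamma^4$ on the right-hand side into $\int|\vSG|^2\gamma^4$ plus $\int|\T|^2\gamma^4$, and the growth hypothesis \eqref{EQa0growthonT} bounds $|\T|^2$ by $c_0(|A|^2|A^o|^4 + |\nnabla A|^2|A^o|^2)$. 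So the first step is to combine Lemma \ref{LMlm1} and Corollary \ref{CY1}: adding the two estimates, the left-hand side controls $|\nnabla_{(2)}\H|^2 + |\H|^2|\nnabla\H|^2 + |A|^4|A^o|^2 + |A|^2|\nnabla A|^2$ times $\gamma^4$, while on the right we have $\int|\nnabla\H|^2\gamma^4$, the bad term $\int|A^o|^2|\nnabla\H|^2\gamma^4$ from Lemma \ref{LMlm1} (actually Lemma \ref{LMlm1}'s right side has $|A^o|^2|\nnabla\H|^2$, and Corollary \ref{CY1} has $|\H|^2|\nnabla\H|^2$; the latter is already on the combined left side and can be absorbed once $\varepsilon$ is small — but here we're not yet using smallness, so I keep it), plus the cubic/sextic curvature terms and the $\rho$-weighted error terms that already appear in the statement.

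Next I would handle the term $\int|\nnabla\H|^2\gamma^4$ that appears on the right but not (as such) on the left. Using the Codazzi consequence \eqref{EQ4}, $\nnabla\H = 2\nnabla\ast A^o$, so $|\nnabla\H|^2 \le c|\nnabla A^o|^2$ pointwise; thus $\int|\nnabla\H|^2\gamma^4 \le c\int|\nnabla A^o|^2\gamma^4 \le c\int|\nnabla A^o|^2\gamma^2$, which is already one of the allowed error terms (with a $1/\rho^2$ factor that we simply bound by a constant times $1$, or keep as is since $\rho$ is fixed by $\tilde\gamma$ — actually the cleanest is to note $\gamma^4 \le \gamma^2$ and fold it into the $\frac{c}{\rho^2}\int|\nnabla A^o|^2\gamma^2$ term, adjusting $c$). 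Similarly the term $\int|A^o|^2|\nnabla\H|^2\gamma^4$ from Lemma \ref{LMlm1} is $\le c\int|A^o|^2|\nnabla A^o|^2\gamma^4$, which fits the allowed shape. Then I deal with $|\T|^2$: the piece $c_0|A|^2|A^o|^4\gamma^4$ — I need to see this absorbed or bounded; since $|A^o|^4 \le |A|^4$ is not quite what I want (it would give $|A|^6$, not allowed directly), instead I observe $|A|^2|A^o|^4 = (|A|^2|A^o|^2)\cdot|A^o|^2$ and use $|A|^2|A^o|^2 \le \frac12(|A|^4|A^o|^2/\!|A^o|^2 \cdot\ldots)$ — hmm, the honest route is $|A|^2|A^o|^4 \le \delta|A|^4|A^o|^2 + \frac{c}{\delta}|A^o|^6$ by Young's inequality (writing $|A|^2|A^o|^4 = (|A|^2|A^o|)\cdot(|A^o|^3)$, Young with exponents $2,2$ gives $\delta|A|^4|A^o|^2 + \frac1{4\delta}|A^o|^6$). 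The first resulting term is absorbed into the left-hand side $\int|A|^4|A^o|^2\gamma^4$ for $\delta$ small, the second is the allowed $|A^o|^6$ term. The piece $c_0|\nnabla A|^2|A^o|^2\gamma^4$: by \eqref{EQ7}, $|\nnabla A|^2 \le c|\nnabla A^o|^2$, so this is $\le c\int|\nnabla A^o|^2|A^o|^2\gamma^4$, exactly an allowed term.

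So the logical skeleton is: (i) add Lemma \ref{LMlm1} $+$ $c\,\times$ Corollary \ref{CY1} with a suitably large constant $c = c(n)$ so that the $|\H|^2|\nnabla\H|^2\gamma^4$ generated by Corollary \ref{CY1} on its right is dominated by the same quantity on the left coming from Lemma \ref{LMlm1} — wait, Lemma \ref{LMlm1}'s left side is $|\nnabla_{(2)}\H|^2 + |\H|^2|\nnabla\H|^2$, and Corollary \ref{CY1}'s right side has $c\int|\H|^2|\nnabla\H|^2\gamma^4$, so I should rather add $\epsilon_0\times$(Corollary \ref{CY1}) to (Lemma \ref{LMlm1}) with $\epsilon_0$ chosen so that $c\epsilon_0 \le \frac12$, keeping $\frac12\int|\H|^2|\nnabla\H|^2\gamma^4$ on the left; then multiply through by $1/\epsilon_0$ to restore a clean left side — the constants all get absorbed into $c(n)$. (ii) Replace $|\nnabla\H|^2$ by $\vSG$ and $\T$ using $a \ge a_0$, bound $|\T|^2$ via \eqref{EQa0growthonT}. (iii) Bound every stray $|\nnabla\H|$, $|\nnabla A|$ by $|\nnabla A^o|$ via \eqref{EQ4}, \eqref{EQ7}; apply Young to the $|A|^2|A^o|^4$ term. (iv) Absorb the small multiples of $\int|A|^4|A^o|^2\gamma^4$, $\int|\H|^2|\nnabla\H|^2\gamma^4$, and (if they arise) $\int|\nnabla_{(2)}\H|^2\gamma^4$ into the left. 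I expect the main obstacle — really the only delicate bookkeeping — to be keeping the absorption constants consistent: the same quantity $\int|\H|^2|\nnabla\H|^2\gamma^4$ sits on the left of one estimate and the right of another, and one must fix the order of operations (scale Corollary \ref{CY1} by a small constant \emph{before} combining, not after) so that nothing circular happens; likewise the Young split of $|A|^2|A^o|^4$ must be done with $\delta$ depending only on $n$ and $c_0$ so that the absorbed term genuinely disappears into the left side. Everything else is routine triangle inequality, Codazzi, and collecting the $\rho$-weighted remainders, which already match the form asserted in the statement.
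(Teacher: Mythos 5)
Your skeleton is the paper's proof: combine Lemma \ref{LMlm1} with a small multiple of Corollary \ref{CY1} (scaling the corollary first so that its right-hand term $c\int|\H|^2|\nnabla\H|^2\gamma^4$ is absorbed by the same quantity on the lemma's left), substitute the equation for the leading term, bound $|\T|^2$ by \eqref{EQa0growthonT}, split $|A|^2|A^o|^4\le\delta|A|^4|A^o|^2+\tfrac1{4\delta}|A^o|^6$ by Young and absorb, and convert the remaining $|\nnabla\H|$, $|\nnabla A|$ factors via \eqref{EQ4} and \eqref{EQ7}. That is exactly what the paper does.

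One passage, however, is wrong as literally written and should be deleted rather than repaired. Throughout you write the operator as $a\nnabla\H+\T$; the definition \eqref{EQgeneralpde} is $a\nDelta\H+\T$, and the term produced by Lemma \ref{LMlm1} that must be fed into the equation is $\int_\Sigma|\nDelta\H|^2\gamma^4\,d\mu$, not $\int_\Sigma|\nnabla\H|^2\gamma^4\,d\mu$. Read as a slip of notation this is harmless in steps (ii)--(iv). But in your middle paragraph you propose to handle ``the term $\int|\nnabla\H|^2\gamma^4$ that appears on the right'' by the Codazzi bound $|\nnabla\H|\le c|\nnabla A^o|$. No standalone $\int|\nnabla\H|^2\gamma^4$ occurs on the right of either input estimate, and if the term you mean is the actual leading term $\int|\nDelta\H|^2\gamma^4$, then Codazzi gives no such pointwise bound: $\nDelta\H$ is second order in the curvature and is controlled only through the equation, which is the whole point of the lemma. (The accompanying suggestion to fold $\int|\nnabla A^o|^2\gamma^4$ into $\tfrac{c}{\rho^2}\int|\nnabla A^o|^2\gamma^2$ by ``bounding $1/\rho^2$ by a constant'' is also not available, since $c$ may not depend on $\rho$ and $\rho\to\infty$ in the final argument.) Since your step (ii) already disposes of the leading term correctly, striking that paragraph leaves a proof that coincides with the paper's.
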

\begin{proof}%{{{
Combining Lemma \ref{LMlm1} with Corollary \ref{CY1} we first obtain
\begin{align}
\int_\Sigma &\big(|\nnabla_{(2)}\H|^2 + |A|^2|\nnabla A|^2+|A|^4|A^o|^2\big)\gamma^4\,d\mu
\notag\\
&\le c\int_\Sigma |\nDelta\H|^2\gamma^4\,d\mu
   + c\int_\Sigma \big(|A^o|^6 + |\nnabla A^o|^2|A^o|^2\big)\gamma^4\,d\mu
   + \frac{c}{\rho^2}\int_\Sigma |\nnabla A^o|^2\gamma^2d\mu
   + \frac{c}{\rho^4}\int_{[\gamma>0]}|A^o|^2d\mu\,.
\label{EQ9}
\end{align}
Now
\begin{align}
\nDelta\H &= a^{-1}\vSG - a^{-1}\T
\intertext{so using \eqref{EQanisotropypositive} and \eqref{EQa0growthonT}}
|\nDelta\H|^2 &\le 2a_0^{-1}|\vSG|^2 + 2a_0^{-1}|\T|^2
\notag\\
&\le 2a_0^{-1}|\vSG|^2 + 2ca_0^{-1} \Big(|A|^2|A^o|^4 + |\nnabla A|^2|A^o|^2\Big)
\notag\\
&\le 2a_0^{-1}|\vSG|^2 + 2ca_0^{-1} \Big(\varepsilon|A|^4|A^o|^2 + \frac{1}{4\varepsilon}|A^o|^6 + c|\nnabla A^o|^2|A^o|^2\Big)
\,.
\label{EQ10}
\end{align}
Choosing $\varepsilon$ small enough and absorbing by combining \eqref{EQ9} with \eqref{EQ10} finishes the proof.
\end{proof}%}}}

\begin{lem}
\label{LMlm4}
Suppose $f:\Sigma\rightarrow\R^n$ is an immersed surface with boundaries satisfying \eqref{EQbc}, and $\gamma$ is a function as in \eqref{EQgamma}.
Assume the operator $\vSG$ is of the form \eqref{EQgeneralpde} satisfying \eqref{EQanisotropypositive} and \eqref{EQagrowthonT}.
Then
\begin{align*}
\int_\Sigma |\nnabla_{(2)}\H|^2\gamma^4\,d\mu
\le c\int_\Sigma |\vSG|^2\gamma^4\,d\mu
   + c\int_\Sigma \big(|A|^{6-q}|A^o|^q + |\nnabla A|^2|A|^2\big)\gamma^4\,d\mu
   + \frac{c}{\rho^2}\int_\Sigma |\nnabla A|^2\gamma^2d\mu
  + \frac{c}{\rho^4}\int_{[\gamma>0]}|A|^2d\mu
\,,\
\end{align*}
where $c$ depends only on $n$, $a_0$, $c_1$, and $\rho$ depends only on $\tilde\gamma$.
\end{lem}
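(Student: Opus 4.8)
The plan is to follow the scheme of the proof of Lemma \ref{LMlm3}, but to exploit the fact that the conclusion here is weaker: only $\int_\Sigma|\nnabla_{(2)}\H|^2\gamma^4\,d\mu$ is controlled on the left. Consequently the only geometric input needed is Lemma \ref{LMlm1}; the Simons-type estimates of Lemma \ref{LMlm2} and Corollary \ref{CY1} (which were what brought the $|A|^2|\nnabla A|^2$ and $|A|^4|A^o|^2$ terms onto the left in Lemma \ref{LMlm3}) play no role here. The boundary conditions \eqref{EQbc} enter only through the already-established Lemma \ref{LMlm1}.

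First I would apply Lemma \ref{LMlm1} and discard the nonnegative term $|\H|^2|\nnabla\H|^2\gamma^4$ from its left-hand side, which leaves
\begin{equation*}
\int_\Sigma |\nnabla_{(2)}\H|^2\gamma^4\,d\mu
\le c\int_\Sigma |\nDelta\H|^2\gamma^4\,d\mu
 + c\int_\Sigma |A^o|^2|\nnabla\H|^2\gamma^4\,d\mu
 + \frac{c}{\rho^2}\int_\Sigma |\nnabla A^o|^2\gamma^2\,d\mu\,.
\end{equation*}
Next I would convert the $A^o$-quantities on the right into $A$-quantities using Codazzi: \eqref{EQ4} gives $|\nnabla\H|\le c|\nnabla A^o|$, while the decomposition $A = A^o+\tfrac12 g\H$ together with \eqref{EQ7} gives $|\nnabla A^o|\le c|\nnabla A|$ and $|A^o|\le|A|$. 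Hence $|A^o|^2|\nnabla\H|^2\le c|A|^2|\nnabla A|^2$ and $|\nnabla A^o|^2\gamma^2\le c|\nnabla A|^2\gamma^2$, so these two terms already have exactly the shape that appears in the claimed estimate.

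It remains to bound $|\nDelta\H|^2$. Writing $\nDelta\H = a^{-1}\vSG - a^{-1}\T$ and using \eqref{EQanisotropypositive} gives $|\nDelta\H|^2 \le c|\vSG|^2 + c|\T|^2$ with $c = c(a_0)$, and then the growth hypothesis \eqref{EQagrowthonT} yields
\begin{equation*}
|\nDelta\H|^2 \le c|\vSG|^2 + c\big(|A|^{6-q}|A^o|^q + |\nnabla A|^2|A|^2\big)\,,\qquad c = c(a_0,c_1)\,.
\end{equation*}
Substituting this together with the Codazzi bounds into the displayed consequence of Lemma \ref{LMlm1}, and harmlessly adjoining the nonnegative quantity $\frac{c}{\rho^4}\int_{[\gamma>0]}|A|^2\,d\mu$ to the right so that the estimate is recorded in the uniform form needed in Section 4, gives the lemma. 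The point to flag — and the only genuine difference from Lemma \ref{LMlm3} — is that no absorption step is required: the two reaction terms produced by \eqref{EQagrowthonT} are already in the final form, whereas in Lemma \ref{LMlm3} the term $|A|^2|A^o|^4$ had to be split by Young's inequality and the resulting $\varepsilon|A|^4|A^o|^2$ absorbed on the left. In particular there is no smallness condition imposed on any constant, and $c$ depends only on $n$, $a_0$ and $c_1$, as asserted.
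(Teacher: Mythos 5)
Your proof is correct, and it is a legitimate streamlining of the paper's argument. The paper proves this lemma by running the full proof of Lemma \ref{LMlm3} (which combines Lemma \ref{LMlm1} with Corollary \ref{CY1}, hence with the Simons-identity estimates of Lemma \ref{LMlm2}) and then explicitly discarding the terms $|A|^4|A^o|^2 + |A|^2|\nnabla A|^2$ from the left-hand side; you observe, correctly, that since those terms are thrown away anyway, one can bypass Lemma \ref{LMlm2} and Corollary \ref{CY1} entirely and work directly from Lemma \ref{LMlm1}, after which the only remaining step is the same pointwise bound $|\nDelta\H|^2 \le c|\vSG|^2 + c\big(|A|^{6-q}|A^o|^q + |\nnabla A|^2|A|^2\big)$ that the paper uses. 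The conversion of the $A^o$-terms on the right of Lemma \ref{LMlm1} into $A$-terms, and the gratuitous adjoining of $\frac{c}{\rho^4}\int_{[\gamma>0]}|A|^2\,d\mu$, are both fine and produce exactly the stated estimate. One small bookkeeping slip: you cite \eqref{EQ7} for $|\nnabla A^o|\le c|\nnabla A|$, but \eqref{EQ7} is the \emph{reverse} inequality $|\nnabla_{(k)}A|\le c|\nnabla_{(k)}A^o|$; the inequality you actually need follows instead directly from the decomposition $A^o = A - \frac12 g\H$ together with $\nnabla\H$ being a metric trace of $\nnabla A$, so nothing is lost. Your closing remark that no absorption or smallness is needed here, in contrast to Lemma \ref{LMlm3}, is accurate.
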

\begin{proof}%{{{
The proof is as above for Lemma \ref{LMlm3} except the estimate \eqref{EQ10} is modified to
\begin{align}
|\nDelta\H|^2 &\le 2a_0^{-1}|\vSG|^2 + 2a_0^{-1}|\T|^2
\notag\\
&\le 2a_0^{-1}|\vSG|^2 + 2ca_0^{-1} \Big(|A|^{6-q}|A^o|^q + |\nnabla A|^2|A|^2\Big)
\,.
\notag
\end{align}
Due to the weaker condition on $\T$ this is the best we can obtain.
In this case we throw away the terms on the left hand side of the form $|A|^4|A^o|^2 + |\nnabla A|^2|A^o|^2$ since they will be of no use.
Estimating $|\nnabla A^o| \le |\nnabla A|$ and $|A^o| \le |A|$ then proceeding again as in the proof of Lemma \ref{LMlm3} finishes the proof.
\end{proof}%}}}

\begin{lem}
\label{LMlm5}
Suppose $f:\Sigma\rightarrow\R^n$ is an immersed surface with boundaries satisfying \eqref{EQbc}, and $\gamma$ is a function as in \eqref{EQgamma}.
Assume the operator $\vSG$ is of the form \eqref{EQgeneralpde} satisfying \eqref{EQanisotropypositive} and \eqref{EQa0growthonT}.
Then
\begin{align*}
\int_\Sigma &\big(|\nnabla_{(2)} A|^2 + |A|^2|\nnabla A|^2 + |A|^4|A^o|^2\big)\gamma^4\,d\mu
\le c\int_\Sigma |\vSG|^2\gamma^4\,d\mu
   + c\int_\Sigma \big(|A^o|^6 + |\nnabla A^o|^2|A^o|^2\big)\gamma^4\,d\mu
  + \frac{c}{\rho^4}\int_{[\gamma>0]}|A^o|^2d\mu
\,.
\end{align*}
\end{lem}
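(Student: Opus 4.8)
The plan is to bootstrap Lemma \ref{LMlm3}, whose left-hand side controls only $|\nnabla_{(2)}\H|^2$, into an estimate controlling the full Hessian $|\nnabla_{(2)} A|^2$, while simultaneously absorbing away the intermediate term $\frac{c}{\rho^2}\int_\Sigma|\nnabla A^o|^2\gamma^2\,d\mu$ that appears on the right of Lemma \ref{LMlm3}. The first observation is that the umbilic boundary conditions \eqref{EQbc} force $\nnabla A\equiv 0$ on $\partial\Sigma$: since $|\nnabla A^o|=|A^o|=0$ on $\partial\Sigma$, the Codazzi identity \eqref{EQ4} gives $\nnabla\H=2\nnabla\ast A^o=0$ there, whence $\nnabla A=\nnabla A^o+\frac12 g\nnabla\H=0$ on $\partial\Sigma$.

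The main step is an $L^2$ identity relating the full Hessian and the Laplacian of $A$, analogous to Lemma \ref{LMlm1}. Integrating $\int_\Sigma|\nnabla_{(2)} A|^2\gamma^4\,d\mu$ by parts twice, commuting covariant derivatives (the commutators are of schematic form $A\ast A\ast\nnabla A$ by the Gauss, Codazzi and Ricci equations on a surface) and discarding boundary terms — which vanish because $\nnabla A=0$, hence $\nnabla_\nu A=0$, on $\partial\Sigma$ — one obtains after Young's inequality on the $\nabla\gamma$-terms and absorbing
\[
\int_\Sigma|\nnabla_{(2)} A|^2\gamma^4\,d\mu \le c\int_\Sigma|\nDelta A|^2\gamma^4\,d\mu + c\int_\Sigma|A|^2|\nnabla A|^2\gamma^4\,d\mu + \frac{c}{\rho^2}\int_\Sigma|\nnabla A|^2\gamma^2\,d\mu.
\]
Writing $\nDelta A=\nDelta A^o+\frac12 g\nDelta\H$ and invoking Simons' identity \eqref{EQ6} gives $|\nDelta A|^2\le c|\nnabla_{(2)}\H|^2+c|A|^4|A^o|^2+c|A^o|^6$ (using $|\H|\le c|A|$). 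Adding $\int_\Sigma(|A|^2|\nnabla A|^2+|A|^4|A^o|^2)\gamma^4\,d\mu$ to both sides and bounding the resulting $\int_\Sigma(|\nnabla_{(2)}\H|^2+|A|^2|\nnabla A|^2+|A|^4|A^o|^2)\gamma^4\,d\mu$ by Lemma \ref{LMlm3}, one reaches the asserted inequality plus a single leftover term $\frac{c}{\rho^2}\int_\Sigma|\nnabla A^o|^2\gamma^2\,d\mu$ (after estimating $|\nnabla A|\le c|\nnabla A^o|$ by \eqref{EQ7}).

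It remains to remove this leftover. Integrating $\frac{1}{\rho^2}\int_\Sigma|\nnabla A^o|^2\gamma^2\,d\mu$ by parts once — the boundary term $\frac1{\rho^2}\int_{\partial\Sigma}\IP{A^o}{\nnabla_\nu A^o}\gamma^2$ vanishes since $A^o=0$ on $\partial\Sigma$ — and applying Young's inequality with a small parameter $\delta$ yields, after absorbing a small multiple of the left-hand side,
\[
\frac{1}{\rho^2}\int_\Sigma|\nnabla A^o|^2\gamma^2\,d\mu \le \delta\int_\Sigma|\nDelta A^o|^2\gamma^4\,d\mu + \frac{c_\delta}{\rho^4}\int_{[\gamma>0]}|A^o|^2\,d\mu.
\]
Feeding Simons' identity into the right-hand side once more converts $\delta\int_\Sigma|\nDelta A^o|^2\gamma^4\,d\mu$ into $\delta c\int_\Sigma|\nnabla_{(2)}\H|^2\gamma^4\,d\mu$ plus $\delta c\int_\Sigma(|A|^4|A^o|^2+|A^o|^6)\gamma^4\,d\mu$; since $|\nnabla_{(2)}\H|\le c|\nnabla_{(2)} A|$, choosing $\delta$ small absorbs the first two of these into the left-hand side of the desired inequality, leaving only admissible terms.

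I expect the main difficulty to be bookkeeping rather than any isolated hard estimate: one must track the small parameters through the Hessian–Laplacian identity, the interpolation, and the two uses of Simons' identity so that every top-order quantity that is reintroduced ($|\nnabla_{(2)} A|^2$, $|\nnabla_{(2)}\H|^2$, $|A|^4|A^o|^2$, $|A|^2|\nnabla A|^2$) is re-absorbed with a strictly smaller coefficient, and one must check at each integration by parts that the boundary contribution is killed by the vanishing of $A^o$ and $\nnabla A^o$ — equivalently $\nnabla A$ — on $\partial\Sigma$ supplied by \eqref{EQbc}.
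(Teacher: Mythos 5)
Your proposal is correct and follows essentially the same route as the paper: the same commutator identity and double integration by parts relating the full Hessian to the Laplacian (with boundary terms killed by \eqref{EQbc}), the same use of Simons' identity to trade $\nDelta A^o$ for $\nnabla_{(2)}\H + |A|^2 A^o + (A^o)^{*3}$, the same reduction to Lemma \ref{LMlm3}, and the same integration-by-parts/absorption trick for the leftover $\rho^{-2}\int|\nnabla A^o|^2\gamma^2\,d\mu$ term. The only cosmetic difference is that you work with $A$ throughout (justified by your correct observation that \eqref{EQbc} and Codazzi force $\nnabla A=0$ on $\partial\Sigma$), whereas the paper works with $A^o$ and converts via \eqref{EQ7} at the end.
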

\begin{proof}%{{{
Let us first note that \eqref{EQ6} allows us to estimate
\begin{equation*}
|\nDelta A^o|^2 \le c|\nnabla_{(2)}\H|^2 + c|\H|^4|A^o|^2 + c|A^o|^6\,.
\end{equation*}
Since $c$ above is absolute, we obtain for some small $\varepsilon > 0$ that
\begin{equation}
\label{EQ11}
\varepsilon|\nDelta A^o|^2 \le |\nnabla_{(2)}\H|^2 + |A|^4|A^o|^2 + |A^o|^6\,,
\end{equation}
and so by combining \eqref{EQ11} with Lemma \ref{LMlm3} we obtain the improvement
\begin{align}
\notag
\int_\Sigma &\big(|\nDelta A^o|^2 + |A|^2|\nnabla A|^2 + |A|^4|A^o|^2\big)\gamma^4\,d\mu
\\
&\le c\int_\Sigma |\vSG|^2\gamma^4\,d\mu
   + c\int_\Sigma \big(|A^o|^6 + |\nnabla A^o|^2|A^o|^2\big)\gamma^4\,d\mu
   + \frac{c}{\rho^2}\int_\Sigma |\nnabla A^o|^2\gamma^2d\mu
  + \frac{c}{\rho^4}\int_{[\gamma>0]}|A^o|^2d\mu
\,.
\label{EQ11.5}
\end{align}
We may now deal with the third integral on the right hand side.
The divergence theorem and \eqref{EQbc} gives
\begin{align*}
   \frac{c'}{\rho^2}\int_\Sigma |\nnabla A^o|^2\gamma^2d\mu
&=
   -\frac{c'}{\rho^2}\int_\Sigma \IP{A^o}{\nDelta A^o}_g^2\gamma^2d\mu
   -\frac{c'}{\rho^2}\int_\Sigma \IP{\nabla\gamma\,A^o}{\nnabla A^o}_g^2\gamma d\mu
   +\frac{c'}{\rho^2}\int_{\partial\Sigma} \IP{A^o}{\nnabla_\nu A^o}_g^2\gamma^2d\mu^\partial
\\&\le 
    \frac14\int_\Sigma |\nDelta A^o|^2\gamma^4\,d\mu
  + \frac{c'}{2\rho^2}\int_\Sigma |\nnabla A^o|^2\gamma^2d\mu
  + \frac{c}{\rho^4}\int_{[\gamma>0]}|A^o|^2d\mu
\end{align*}
so that we obtain
\begin{equation}
\label{EQ12}
   \frac{c'}{\rho^2}\int_\Sigma |\nnabla A^o|^2\gamma^2d\mu
\le 
    \frac12\int_\Sigma |\nDelta A^o|^2\gamma^4\,d\mu
  + \frac{c}{\rho^4}\int_{[\gamma>0]}|A^o|^2d\mu\,.
\end{equation}
Combining \eqref{EQ12} with \eqref{EQ11.5} above gives
\begin{align}
\int_\Sigma &\big(|\nDelta A^o|^2 + |A|^2|\nnabla A|^2 + |A|^4|A^o|^2\big)\gamma^4\,d\mu
\\
&\le c\int_\Sigma |\vSG|^2\gamma^4\,d\mu
   + c\int_\Sigma \big(|A^o|^6 + |\nnabla A^o|^2|A^o|^2\big)\gamma^4\,d\mu
  + \frac{c}{\rho^4}\int_{[\gamma>0]}|A^o|^2d\mu
\,.
\label{EQ12.5}
\end{align}
Let us now prove
\begin{align}
\int_\Sigma |\nnabla_{(2)}A^o|^2\gamma^4d\mu
 &\le
    \int_\Sigma |\nDelta A^o|^2\gamma^4d\mu 
  + c\int_\Sigma |A|^2|\nnabla A|^2\gamma^4d\mu
\notag
\\
&\hskip+2cm
  + \frac{c}{\rho^2}\int_\Sigma |\nnabla A^o|^2\gamma^2d\mu
\,.
\label{EQ13}
\end{align}
We begin by using a consequence of the interchange formula for covariant derivatives:
\begin{equation}
\label{EQ13.5}
\nDelta \nnabla A^o = \nnabla \nDelta A^o + \nnabla A^o * A * A .
\end{equation}
Testing \eqref{EQ13.5} against $\nnabla A^o\,\gamma^4$ and using \eqref{EQbc} with the divergence theorem we find
\begin{align}
\int_\Sigma |\nnabla_{(2)}A^o|^2\gamma^4d\mu
 &=
 - \int_\Sigma \IP{\nnabla A^o}{\nDelta \nnabla A^o}_g\gamma^4d\mu
 - 4\int_\Sigma \IP{\nabla\gamma\nnabla A^o}{\nnabla_{(2)}A^o}_g \gamma^3d\mu
\notag\\&\quad
 + \int_{\partial\Sigma} \IP{\nnabla_\nu\nnabla A^o}{\nnabla A^o}\gamma^4d\mu^\partial
\notag
\\
 &=
 - \int_\Sigma \IP{\nnabla A^o}{\nnabla\Delta  A^o}_g\gamma^4d\mu
 - 4\int_\Sigma \IP{\nabla\gamma\nnabla A^o}{\nnabla_{(2)}A^o}_g \gamma^3d\mu
\notag
\\
 &\quad + \int_\Sigma \nnabla A^o * \nnabla A^o * A * A\ \gamma^4d\mu
\notag
\\
 &=
  \int_\Sigma |\nDelta  A^o|^2\gamma^4d\mu
 + 4\int_\Sigma \IP{\nnabla A^o}{\nabla\gamma\nDelta A^o}_g \gamma^3d\mu
\notag
\\
 &\quad
 - \int_{\partial\Sigma} \IP{\nnabla_\nu A^o}{\nDelta A^o}\gamma^4d\mu^\partial
\notag
\\
 &\quad
 - 4\int_\Sigma \IP{\nabla\gamma\nnabla A^o}{\nnabla_{(2)}A^o}_g \gamma^3d\mu
 + \int_\Sigma \nnabla A^o * \nnabla A^o * A * A\ \gamma^4d\mu
\notag
\\
 &\le
    \int_\Sigma |\nDelta A^o|^2\gamma^4d\mu 
  + c\int_\Sigma |A|^2|\nnabla A^o|^2\gamma^4d\mu
\notag
\\
 &\quad
  + \int_\Sigma \nnabla_{(2)}A^o*\nnabla A^o*\nabla\gamma\ \gamma^3d\mu
\notag
\\
 &\le
    \int_\Sigma |\nDelta A^o|^2\gamma^4d\mu 
  + c\int_\Sigma |A|^2|\nnabla A^o|^2\gamma^4d\mu
\notag
\\
 &\quad
  + \frac{c}{\rho^2}\int_\Sigma |\nnabla A^o|^2 \gamma^2d\mu
  + \frac12\int_\Sigma |\nnabla_{(2)}A^o|^2\gamma^4d\mu
\notag
\end{align}
which by absorption implies \eqref{EQ13}.
Now from \eqref{EQ12} we improve \eqref{EQ13} to
\begin{align}
\int_\Sigma |\nnabla_{(2)}A^o|^2\gamma^4d\mu
 &\le
    \int_\Sigma |\nDelta A^o|^2\gamma^4d\mu 
  + c\int_\Sigma |A|^2|\nnabla A|^2\gamma^4d\mu
  + \frac{c}{\rho^4}\int_{[\gamma>0]} |A^o|^4d\mu
\,.
\label{EQ14}
\end{align}
As before with \eqref{EQ12} we multiply \eqref{EQ14} by a small constant and combine with \eqref{EQ12.5} to find
\begin{align}
\int_\Sigma &\big(|\nnabla_{(2)} A^o|^2 + |A|^2|\nnabla A|^2 + |A|^4|A^o|^2\big)\gamma^4\,d\mu
\notag\\
&\le c\int_\Sigma |\vSG|^2\gamma^4\,d\mu
   + c\int_\Sigma \big(|A^o|^6 + |\nnabla A^o|^2|A^o|^2\big)\gamma^4\,d\mu
  + \frac{c}{\rho^4}\int_{[\gamma>0]}|A^o|^2d\mu
\,.
\label{EQ15}
\end{align}
Estimating the leading order term in \eqref{EQ15} above from below using \eqref{EQ7} finishes the proof.
\end{proof}%}}}

\begin{lem}
\label{LMlm6}
Suppose $f:\Sigma\rightarrow\R^n$ is an immersed surface with boundaries satisfying \eqref{EQbc}, and $\gamma$ is a function as in \eqref{EQgamma}.
Assume the operator $\vSG$ is of the form \eqref{EQgeneralpde} satisfying \eqref{EQanisotropypositive} and \eqref{EQagrowthonT}.
Then
\begin{align*}
\int_\Sigma |\nnabla_{(2)}A|^2\gamma^4\,d\mu
\le c\int_\Sigma |\vSG|^2\gamma^4\,d\mu
   + c\int_\Sigma \big(|A|^{6-q}|A^o|^q + |\nnabla A|^2|A|^2\big)\gamma^4\,d\mu
   + \frac{c}{\rho^4}\int_{[\gamma>0]}|A|^2d\mu
\,,\
\end{align*}
where $c$ depends only on $n$, $a_0$, $c_1$, and $\rho$ depends only on $\tilde\gamma$.
\end{lem}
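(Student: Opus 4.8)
The plan is to follow the proof of Lemma~\ref{LMlm5} line for line, replacing Lemma~\ref{LMlm3} by Lemma~\ref{LMlm4} wherever the PDE enters, with two adjustments forced by the weaker hypothesis \eqref{EQagrowthonT}. Since the Codazzi inequality \eqref{EQ7} gives $|\nnabla_{(2)}A|\le c|\nnabla_{(2)}A^o|$, it suffices to estimate $\int_\Sigma|\nnabla_{(2)}A^o|^2\gamma^4\,d\mu$. The interchange argument leading to \eqref{EQ13} uses only \eqref{EQbc} to discard its boundary integrals, so it survives unchanged and gives
\[
\int_\Sigma|\nnabla_{(2)}A^o|^2\gamma^4\,d\mu \le \int_\Sigma|\nDelta A^o|^2\gamma^4\,d\mu + c\int_\Sigma|A|^2|\nnabla A|^2\gamma^4\,d\mu + \frac{c}{\rho^2}\int_\Sigma|\nnabla A^o|^2\gamma^2\,d\mu .
\]
Everything therefore reduces to bounding $\int_\Sigma|\nDelta A^o|^2\gamma^4\,d\mu$ by the claimed right-hand side and to absorbing the $\rho^{-2}$-weighted gradient terms.

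For the first task I would use Simons' identity \eqref{EQ6} in the form $|\nDelta A^o|^2\le c|\nnabla_{(2)}\H|^2+c|\H|^4|A^o|^2+c|A^o|^6$. The term $|\nnabla_{(2)}\H|^2$ is controlled by Lemma~\ref{LMlm4}; the term $|A^o|^6$ is harmless since $|A^o|\le|A|$ forces $|A^o|^6\le|A|^{6-q}|A^o|^q$ for every $q\in(0,6]$. The middle term is the delicate one: in Lemma~\ref{LMlm5} it was absorbed into the coercive term $|A|^4|A^o|^2$ that Corollary~\ref{CY1} had produced on the left, but under \eqref{EQagrowthonT} that term is thrown away in Lemma~\ref{LMlm4} and is no longer available. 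Instead I would re-derive control of $\int_\Sigma|\H|^4|A^o|^2\gamma^4\,d\mu$ directly from Lemma~\ref{LMlm2}, in which $|\H|^4|A^o|^2$ is bounded by $|\H|^2|\nnabla\H|^2$, $|A^o|^2|\nnabla A^o|^2$, $|A^o|^6$ and the $\rho^{-4}$-remainder; the last three are absorbed using $|\nnabla A^o|\le|\nnabla A|$, $|A^o|\le|A|$, and $\int_\Sigma|\H|^2|\nnabla\H|^2\gamma^4\,d\mu$ is in turn bounded through Lemma~\ref{LMlm1} together with \eqref{EQ4} and $\nDelta\H=a^{-1}\vSG-a^{-1}\T$ (via \eqref{EQanisotropypositive} and \eqref{EQagrowthonT}) by $c\int_\Sigma|\vSG|^2\gamma^4\,d\mu + c\int_\Sigma(|A|^{6-q}|A^o|^q+|\nnabla A|^2|A|^2)\gamma^4\,d\mu + \frac{c}{\rho^2}\int_\Sigma|\nnabla A|^2\gamma^2\,d\mu$. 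Collecting these estimates yields the desired bound for $\int_\Sigma|\nDelta A^o|^2\gamma^4\,d\mu$ modulo one leftover term of the form $\frac{c}{\rho^2}\int_\Sigma|\nnabla A|^2\gamma^2\,d\mu$.

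To absorb that term (and, via $|\nnabla A^o|\le|\nnabla A|$, the $|\nnabla A^o|$ version above) I would integrate $\Div\big(\IP{A}{\nnabla A}\gamma^2\big)$ exactly as in the derivation of \eqref{EQ12}. The boundary contribution is $\int_{\partial\Sigma}\IP{A}{\nnabla_\nu A}\gamma^2\,d\mu^\partial$, and it vanishes: although $|A|$ itself need not be zero on $\partial\Sigma$ under the umbilic boundary conditions, \eqref{EQ7} at $k=1$ together with \eqref{EQbc} forces $|\nnabla A|=0$ on $\partial\Sigma$. A Young's inequality with small weight $\delta$ then gives $\frac{c}{\rho^2}\int_\Sigma|\nnabla A|^2\gamma^2\,d\mu\le\delta\int_\Sigma|\nDelta A|^2\gamma^4\,d\mu+\frac{c_\delta}{\rho^4}\int_{[\gamma>0]}|A|^2\,d\mu$, and since $|\nDelta A|^2\le c|\nDelta A^o|^2+c|\nDelta\H|^2\le c|\nDelta A^o|^2+c|\vSG|^2+c(|A|^{6-q}|A^o|^q+|\nnabla A|^2|A|^2)$, choosing $\delta$ small lets the $|\nDelta A^o|^2$ contribution be absorbed into the left. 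Assembling the pieces gives the bound for $\int_\Sigma|\nDelta A^o|^2\gamma^4\,d\mu$, hence for $\int_\Sigma|\nnabla_{(2)}A^o|^2\gamma^4\,d\mu$, and finally for $\int_\Sigma|\nnabla_{(2)}A|^2\gamma^4\,d\mu$ by \eqref{EQ7}.

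The main obstacle is precisely the middle Simons term $|\H|^4|A^o|^2$: under \eqref{EQagrowthonT} there is no coercive $|A|^4|A^o|^2$ on the left into which to absorb it, so one must pay the price of re-running Lemmas~\ref{LMlm1}--\ref{LMlm2} against the PDE, which regenerates $\rho^{-2}$-weighted gradient terms; re-absorbing those cleanly is what requires the (not entirely obvious, but true) observation that $|\nnabla A|$, and not merely $|\nnabla A^o|$, vanishes on $\partial\Sigma$ under \eqref{EQbc}.
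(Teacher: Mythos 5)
Your proposal is correct and follows the same route as the paper, which simply reruns the proof of Lemma \ref{LMlm5} with Lemma \ref{LMlm4} in place of Lemma \ref{LMlm3} and disposes of the $\rho^{-2}$-weighted gradient term by an analogue of \eqref{EQ12}. Your two refinements --- sourcing the control of $\int_\Sigma|\H|^4|A^o|^2\gamma^4\,d\mu$ from Lemma \ref{LMlm2} (equivalently, keeping the $|A|^4|A^o|^2$ term produced by Corollary \ref{CY1} on the left just long enough to absorb the middle Simons term, then discarding it) and observing that \eqref{EQ7} with \eqref{EQbc} forces $|\nnabla A|=0$ on $\partial\Sigma$ so that the new boundary term in the \eqref{EQ12}-type estimate vanishes --- are exactly the details the paper's two-sentence proof leaves implicit.
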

\begin{proof}%{{{
The proof is as for Lemma \ref{LMlm5} above except we again throw away the terms on the left hand side of the form
$|A|^4|A^o|^2 + |\nnabla A|^2|A^o|^2$ since they will be of no use.
The third integral on the right hand side from Lemma \ref{LMlm4} is dealt with by an estimate analogous to \eqref{EQ12}.
\end{proof}%}}}

%}}}

\section{Almost umbilic and almost flat in $L^2$}
%{{{

We shall combine the estimates from Section 3 with the smallness assumptions and the Michael-Simon
Sobolev inequality \cite{MS73} for manifolds with boundary:

\begin{thm}
\label{MSSbdy}
Suppose $f:M^m\rightarrow\R^n$ is a smooth immersion of the $m$-dimensional manifold $M$ with boundary $\partial M$ into $\R^n$. Then for any $u\in C^1(\overline{M})$
\[
\Big(\int_M |u|^{m/(m-1}\,d\mu\Big)^{(m-1)/2}
\le \frac{4^{m+1}}{\omega_m^{1/m}} \Big(
    \int_M |\nabla u| + |\H||u|\,d\mu + \int_{\partial M} |u|\,d\mu^\partial
                                   \Big)\,.
\]
\end{thm}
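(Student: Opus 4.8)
The plan is to reduce the statement for a manifold with boundary to the known closed (or complete boundaryless) case by a doubling argument. First I would recall the classical Michael--Simon inequality: for a compact manifold $M^m$ \emph{without} boundary and $u\in C^1(M)$, one has $\big(\int_M|u|^{m/(m-1)}d\mu\big)^{(m-1)/m}\le C(m)\int_M(|\nabla u|+|\H||u|)\,d\mu$ with $C(m)=4^{m+1}\omega_m^{-1/m}$. The mechanism behind this is the monotonicity formula for the weighted measure $|u|\,d\mu$ at a point $x\in M$, namely that $r\mapsto e^{\Lambda r}\big(r^{-m}\int_{B_r(x)}|u|\,d\mu + \text{(flux terms involving }|\nabla u|,|\H||u|\text{)}\big)$ is monotone; integrating this across scales yields the estimate. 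The only place where $\partial M\ne\emptyset$ enters the monotonicity computation is through a boundary flux term: when one integrates $\mathrm{div}$ of the radial vector field weighted by $|u|$ over $B_r(x)\cap M$, an extra term $\int_{\partial M\cap B_r(x)}|u|\,d\mu^\partial$ appears, with a bounded coefficient. I would therefore rerun the standard monotonicity argument verbatim, carrying this additional nonnegative boundary contribution, and observe that it is dominated by (a constant times) $\int_{\partial M}|u|\,d\mu^\partial$ uniformly in $r$; this produces exactly the stated inequality, with the boundary integral absorbed into the right-hand side and the constant unchanged up to the universal factor $4^{m+1}$.

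The cleanest route, which avoids redoing the monotonicity estimate, is the \emph{doubling construction}. Given $f:M\to\R^n$ with boundary $\partial M$, one glues two copies of $M$ along $\partial M$ to form a manifold $\widehat M$ without boundary, equipped with a map $\widehat f:\widehat M\to\R^n$ that restricts to $f$ on each copy. This $\widehat f$ is Lipschitz and piecewise smooth, smooth away from the gluing locus, and the mean curvature of $\widehat f$ agrees with $\H$ on the interior of each copy. The function $u$ extends to $\widehat u\in C^{0,1}(\widehat M)$ by reflection, with $|\widehat u|$ equal to $|u|$ on each copy and $|\nabla\widehat u|$ bounded by $|\nabla u|$ (the tangential derivative along $\partial M$ is continuous across the seam; the normal derivative has a jump but no distributional singular part in the relevant Sobolev sense because $\widehat u$ is Lipschitz). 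Applying the boundaryless Michael--Simon inequality to $(\widehat M,\widehat f,\widehat u)$ — valid for Lipschitz data by approximation — gives
\[
\Big(\int_{\widehat M}|\widehat u|^{m/(m-1)}d\mu\Big)^{(m-1)/m}
\le \frac{4^{m+1}}{\omega_m^{1/m}}\int_{\widehat M}\big(|\nabla\widehat u|+|\H||\widehat u|\big)\,d\mu,
\]
and since $\widehat M$ is two copies of $M$, the left side is $2^{(m-1)/m}\big(\int_M|u|^{m/(m-1)}d\mu\big)^{(m-1)/m}$ while the right side is $2\cdot\frac{4^{m+1}}{\omega_m^{1/m}}\int_M(|\nabla u|+|\H||u|)\,d\mu$. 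Dividing through keeps the constant of the asserted form (indeed improves it, so the stated bound certainly holds), and in this formulation the boundary term $\int_{\partial M}|u|\,d\mu^\partial$ is not even needed — it is only required in the monotonicity proof when one does not double, which is why we have included it in the statement for flexibility in the applications.

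The main obstacle is making the doubled object $(\widehat M,\widehat f)$ legitimate as a varifold / weak immersion to which the boundaryless Michael--Simon inequality applies: $\widehat f$ fails to be an immersion along the seam, the induced metric is only Lipschitz there, and one must check that no spurious contribution to the generalized mean curvature concentrates on $\partial M$. The resolution is that Michael--Simon holds for integral varifolds with bounded first variation (equivalently, locally bounded generalized mean curvature measure), and the doubled varifold has generalized mean curvature equal to the (a.e.-defined) $\H$ plus a possible singular part along $\partial M$; but by construction the two copies meet $\partial M$ at equal and opposite angles, so the boundary contributions to the first variation cancel exactly and the singular part vanishes. Carefully verifying this cancellation — or, alternatively, smoothing the corner along $\partial M$ and passing to the limit while controlling the extra mean curvature — is the one genuinely technical point; everything else is bookkeeping. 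I would present the smoothing-and-limit version, as it keeps all objects classical and the passage to the limit is justified by dominated convergence once the smoothing is done at a scale tending to zero with controlled $C^1$ cost.
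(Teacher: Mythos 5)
There is a genuine gap, and it is fatal to the route you say you would present. Your doubling argument concludes that ``the boundary term $\int_{\partial M}|u|\,d\mu^\partial$ is not even needed.'' That statement is false, and a one-line counterexample shows it: take $M$ a flat disk in a plane in $\R^n$ and $u\equiv 1$. Then $\nabla u=0$ and $\H=0$, so without the boundary term the right-hand side is zero while the left-hand side is $|M|^{(m-1)/m}>0$. The error in the doubling construction is the claimed cancellation of the first variation along the seam. Since $\widehat f$ restricts to $f$ on \emph{both} copies, the doubled object is a fold: in the image the two sheets coincide near $f(\partial M)$ and their outward conormals are \emph{equal}, not opposite, so the singular part of the first variation is $2\nu\,\mathcal H^{m-1}\llcorner f(\partial M)$ rather than zero. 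Equivalently, in the smoothing-and-limit version, rounding the fold at scale $\epsilon$ produces mean curvature of order $1/\epsilon$ on a collar of measure of order $\epsilon\,|\partial M|$, so $\int|\H||u|\,d\mu$ over the collar does not tend to zero; it tends to (a multiple of) exactly the boundary integral you are trying to discard. Your first route (rerunning the monotonicity formula and carrying the boundary flux) is sound in principle and is the standard way such free-boundary monotonicity arguments are done, but you explicitly set it aside in favour of the broken doubling argument.

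For comparison, the paper's proof is different from both of your routes and much lighter: it truncates $u$ by the factor $\min\{1,k\Lambda\}$, where $\Lambda$ is the distance to $\partial M$, so that the truncated function is supported away from the boundary and the classical Michael--Simon inequality applies to it directly. The gradient of the truncation factor, computed in Fermi coordinates on a collar of width $1/k$, contributes $k\int_0^{1/k}\int_{\partial M}|u|\to\int_{\partial M}|u|\,d\mu^\partial$ as $k\to\infty$. That is precisely where the boundary term comes from, and it is why it must appear in the statement.
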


The proof of Theorem \ref{MSSbdy} is a straightforward application of the standard Michael-Simon
Sobolev ineqaulity and is a well-known folklore result.  As the details are difficult to find in the
literature, we have provided a proof in the appendix for the convenience of the reader.

We now need boundary versions of the multiplicative Sobolev inequalities from \cite{KS01,KS02}.

\begin{lem}
\label{LMmsAo}
Suppose $f:\Sigma\rightarrow\R^n$ is an immersed surface with boundaries satisfying \eqref{EQbc}, and $\gamma$ is a function as in \eqref{EQgamma}.
Then
\begin{align*}
\int_\Sigma |A^o|^6\gamma^4d\mu
&+ \int_\Sigma |\nnabla A^o|^2|A^o|^2\gamma^4d\mu
\\*
 &\le c\vn{A^o}^2_{2,[\gamma>0]}\int_\Sigma \big(|\nnabla_{(2)} A|^2 + |\nnabla A|^2|A|^2 + |A|^2|A^o|^4\big)\,\gamma^4d\mu
    + c\rho^{-4}\vn{A^o}^4_{2,[\gamma>0]}
\,,
\end{align*}
where $c$ depends only on $n$, and $\rho$ depends only on $\tilde\gamma$.
\end{lem}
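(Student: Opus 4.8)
The plan is to prove the two Sobolev-type estimates for $\vn{A^o}^6$ and $\vn{\nnabla A^o}^2 \vn{A^o}^2$ together, following the strategy of Kuwert--Schätzle \cite{KS01,KS02} but keeping careful track of the boundary terms. The key tool is the Michael--Simon Sobolev inequality for manifolds with boundary, Theorem \ref{MSSbdy}, applied in dimension $m=2$ to the function $u = |A^o|^2\gamma^2$ (and variants with a factor of $|A^o|$ replaced by $|\nnabla A^o|$ to handle the second term). For $m=2$ the inequality reads
\[
\Big(\int_\Sigma |u|^2\,d\mu\Big)^{1/2}
\le c\Big(\int_\Sigma |\nabla u| + |\H||u|\,d\mu + \int_{\partial\Sigma}|u|\,d\mu^\partial\Big).
\]
The crucial observation is that with $u = |A^o|^2\gamma^2$, the boundary integrand $|u| = |A^o|^2\gamma^2$ vanishes identically on $\partial\Sigma$ by the umbilic boundary condition \eqref{EQbc}; likewise any test function built from $A^o$ or $\nnabla A^o$ times powers of $\gamma$ vanishes on $\partial\Sigma$, so the boundary contribution in Michael--Simon is zero. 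This is exactly where \eqref{EQbc} is used in this lemma.

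First I would take $u = |A^o|^2\gamma^2$, square the Michael--Simon inequality, and expand $\nabla u = 2|A^o|\nnabla A^o \gamma^2 + 2|A^o|^2\gamma\nabla\gamma$ (using $|\nabla|A^o||\le|\nnabla A^o|$) and $|\H||u| \le c|A||A^o|^2\gamma^2$. This gives
\[
\int_\Sigma |A^o|^4\gamma^4\,d\mu \le c\Big(\int_\Sigma |A^o||\nnabla A^o|\gamma^2 + |A||A^o|^2\gamma^2 + \rho^{-1}|A^o|^2\gamma\,d\mu\Big)^2.
\]
One then interpolates: for instance $\int |A||A^o|^2\gamma^2 \le (\int |A|^2|A^o|^2\gamma^2)^{1/2}(\int|A^o|^2\gamma^2)^{1/2}$, and similarly for the $|A^o||\nnabla A^o|$ term, each time peeling off a factor $\vn{A^o}_{2,[\gamma>0]}$ via Cauchy--Schwarz, and absorbing the $\rho^{-1}$ term. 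The standard trick is that $\int|A^o|^4\gamma^4$ bounds $\int|A^o|^6\gamma^4$ only after another application of the inequality to $u=|A^o|^3\gamma^2$, or more efficiently one works directly with $u = |A^o|^2\gamma^2$ and uses that $\int|A^o|^6\gamma^4 = \int (|A^o|^2\gamma^2)\cdot(|A^o|^2)\cdot(|A^o|^2\gamma^2) \le \vn{|A^o|^2\gamma^2}_\infty \cdots$ --- no, the cleaner route is: estimate $\int|A^o|^6\gamma^4$ by applying Michael--Simon to $u=|A^o|^3\gamma^2$, expand $\nabla u$ to get terms $|A^o|^2|\nnabla A^o|\gamma^2$ and $\rho^{-1}|A^o|^3\gamma$, and the $|\H||u|$ term gives $|A||A^o|^3\gamma^2$; then Cauchy--Schwarz in the form $\int |A^o|^2|\nnabla A^o|\gamma^2 \le (\int|\nnabla A^o|^2|A^o|^2\gamma^4)^{1/2}(\int|A^o|^2\gamma^{0})^{1/2}$ where the second factor over $[\gamma>0]$ is $\vn{A^o}_{2,[\gamma>0]}$, after which Young's inequality with a small parameter lets one absorb $\int|\nnabla A^o|^2|A^o|^2\gamma^4$ into the left side of the combined estimate.

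For the second term $\int|\nnabla A^o|^2|A^o|^2\gamma^4$, I would apply Michael--Simon to $u = |\nnabla A^o||A^o|\gamma^2$. Expanding the gradient produces $|\nnabla_{(2)}A^o||A^o|\gamma^2$, $|\nnabla A^o|^2\gamma^2$, and $\rho^{-1}|\nnabla A^o||A^o|\gamma$, while $|\H||u| \le c|A||\nnabla A^o||A^o|\gamma^2$. After squaring, Cauchy--Schwarz peels off a $\vn{A^o}_{2,[\gamma>0]}$ from the $|\nnabla_{(2)}A^o||A^o|\gamma^2$ term leaving $\int|\nnabla_{(2)}A^o|^2\gamma^4$ on the right (as allowed by the statement), and from the $|\nnabla A^o|^2\gamma^2$ term (which is problematic --- it is the same order as the left side without the extra $|A^o|^2$ weight). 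This is the main obstacle: the term $\int|\nnabla A^o|^2\gamma^2$ without the curvature weight appears and cannot be directly absorbed. The resolution, as in \cite{KS01}, is to first establish the bound $\int|\nnabla A^o|^2\gamma^2 \le c\int|\nnabla_{(2)}A^o|\,|A^o|\gamma^2 + c\rho^{-2}\int|A^o|^2\gamma^0$ by integration by parts (using \eqref{EQbc} to kill the boundary term, exactly as in equation \eqref{EQ12}), then Cauchy--Schwarz and Young give $\int|\nnabla A^o|^2\gamma^2 \le \delta\int|\nnabla_{(2)}A^o|^2\gamma^4 + c_\delta\rho^{-4}\vn{A^o}^2_{2,[\gamma>0]} + (\text{lower order})$; feeding this back controls the offending term. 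Combining the estimates for both pieces, choosing all interpolation parameters small enough to absorb, and recognising $\int|A|^2|A^o|^4\gamma^4$ and $\int|\nnabla A|^2|A|^2\gamma^4$ (via $|\nnabla A^o|\le|\nnabla A|$, $|A^o|\le|A|$) and $\int|\nnabla_{(2)}A|^2\gamma^4$ (via \eqref{EQ7}) as the terms appearing on the right-hand side of the claimed inequality completes the proof.
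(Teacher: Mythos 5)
Your proposal is correct and follows essentially the same route as the paper: apply Theorem \ref{MSSbdy} to $u=|A^o|^3\gamma^2$ and $u=|\nnabla A^o|\,|A^o|\,\gamma^2$, use \eqref{EQbc} to annihilate the boundary integrals, peel off factors of $\vn{A^o}_{2,[\gamma>0]}$ by Cauchy--Schwarz, and remove the unweighted term $\big(\int|\nnabla A^o|^2\gamma^2\,d\mu\big)^2$ by an integration by parts whose boundary term again vanishes by \eqref{EQbc} (the paper's \eqref{EQmsAo2.5}), before passing from $A^o$ to $A$ via \eqref{EQ7}. The only difference is the initial detour through $u=|A^o|^2\gamma^2$, which you correctly abandon in favour of the exponent the paper uses.
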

\begin{proof}%{{{
Applying Theorem \ref{MSSbdy} with $u = |A^o|^3\gamma^2$, estimating and using \eqref{EQbc} we find
\begin{align}
\int_\Sigma |A^o|^6\gamma^4d\mu
 &\le c\bigg(\int_\Sigma \big(|A^o|^2|\nnabla A^o|\,\gamma^2 + \rho^{-1}|A^o|^3\gamma + |\H|\,|A^o|^3\gamma^2\big)\,d\mu\bigg)^2
    + c\bigg(\int_{\partial\Sigma} |A^o|^3\gamma^2d\mu^\partial\bigg)^2
\notag\\
 &\le c\vn{A^o}^2_{2,[\gamma>0]}\int_\Sigma \big(|\nnabla A^o|^2|A^o|^2 + |A|^2|A^o|^4\big)\,\gamma^4d\mu
    + c\rho^{-4}\vn{A^o}^4_{2,[\gamma>0]}
\label{EQmsAo1}
\,,
\end{align}
where we used
\begin{align*}
 c\rho^{-2}\bigg(\int_\Sigma |A^o|^3\gamma\,d\mu\bigg)^2
&\le 
    c\rho^{-4}\vn{A^o}^4_{2,[\gamma>0]}
  + c\bigg(\int_\Sigma |A^o|^4\gamma^2\,d\mu\bigg)^2
\\&\le 
    c\rho^{-4}\vn{A^o}^4_{2,[\gamma>0]}
  + c\rho^{-2}\vn{A^o}^2_{2,[\gamma>0]}\int_\Sigma |A^o|^6\gamma^4d\mu
\\&\le 
    c\rho^{-4}\vn{A^o}^4_{2,[\gamma>0]}
  + c\rho^{-2}\vn{A^o}^2_{2,[\gamma>0]}\int_\Sigma |A|^2|A^o|^4\gamma^4d\mu
\,.
\end{align*}
Now we apply Theorem \ref{MSSbdy} with $u = |\nnabla A^o|\,|A^o|\,\gamma^2$ and use \eqref{EQbc} again to obtain
\begin{align}
\int_\Sigma |\nnabla A^o|^2|A^o|^2\gamma^4d\mu
 &\le c\bigg(\int_\Sigma \big(|A^o|\,|\nnabla_{(2)} A^o|\,\gamma^2 + |\nnabla A^o|^2\gamma^2
\notag\\&\qquad + \rho^{-1}|\nnabla A^o|\,|A^o|\,\gamma^2 + |\H|\,|A^o|\,|\nnabla A^o|\,\gamma^2\big)\,d\mu\bigg)^2
    + c\bigg(\int_{\partial\Sigma} |A^o|\,|\nnabla A^o|\,\gamma^2d\mu^\partial\bigg)^2
\notag\\
 &\le c\vn{A^o}^2_{2,[\gamma>0]}\int_\Sigma \big(|\nnabla_{(2)} A^o|^2 + |\nnabla A^o|^2|A|^2 \big)\,\gamma^4d\mu
\notag\\&\qquad
    + c\bigg(\int_\Sigma |\nnabla A^o|^2\gamma^2d\mu\bigg)^2 + c\rho^{-4}\vn{A^o}^4_{2,[\gamma>0]}
\label{EQmsAo2}
\,.
\end{align}
Let us remove the second integral on the right hand side of \eqref{EQmsAo2}.
First estimate
\begin{align*}
\bigg(\int_\Sigma |\nnabla A^o|^2\gamma^2d\mu\bigg)^2
 &=
\bigg(
- \int_\Sigma \IP{\nDelta A^o}{A^o}\gamma^2d\mu
- 2\int_\Sigma \IP{\nnabla A^o}{\nabla\gamma\,A^o}\gamma\,d\mu
+ \int_{\partial\Sigma} \IP{\nnabla_\nu A^o}{A^o}_g\gamma^2d\mu
\bigg)^2
\\
 &\le
      c\vn{A^o}^2_{2,[\gamma>0]}\int_\Sigma \big(|\nnabla_{(2)} A^o|^2 + |\nnabla A^o|^2|A|^2 \big)\,\gamma^4d\mu
    + c\rho^{-2}\vn{A^o}^2_{2,[\gamma>0]}\int_\Sigma |\nnabla_{(2)} A^o|^2 \gamma^4d\mu
\\
 &\le
      c\vn{A^o}^2_{2,[\gamma>0]}\int_\Sigma \big(|\nnabla_{(2)} A^o|^2 + |\nnabla A^o|^2|A|^2 \big)\,\gamma^4d\mu
\\&\qquad
    + \frac12\bigg(\int_\Sigma |\nnabla A^o|^2\gamma^2d\mu\bigg)^2
    + c\rho^{-4}\vn{A^o}^4_{2,[\gamma>0]}
\end{align*}
so that absorbing gives
\begin{equation}
\bigg(\int_\Sigma |\nnabla A^o|^2\gamma^2d\mu\bigg)^2
 \le
      c\vn{A^o}^2_{2,[\gamma>0]}\int_\Sigma \big(|\nnabla_{(2)} A^o|^2 + |\nnabla A^o|^2|A|^2 \big)\,\gamma^4d\mu
    + c\rho^{-4}\vn{A^o}^4_{2,[\gamma>0]}
\,.
\label{EQmsAo2.5}
\end{equation}
Combining \eqref{EQmsAo2.5} with \eqref{EQmsAo2} gives
\begin{align}
\int_\Sigma |\nnabla A^o|^2|A^o|^2\gamma^4d\mu
 &\le c\vn{A^o}^2_{2,[\gamma>0]}\int_\Sigma \big(|\nnabla_{(2)} A^o|^2 + |\nnabla A^o|^2|A|^2 \big)\,\gamma^4d\mu
    + c\rho^{-4}\vn{A^o}^4_{2,[\gamma>0]}
\label{EQmsAo3}
\,.
\end{align}
Noting \eqref{EQ7} while adding together \eqref{EQmsAo1} and \eqref{EQmsAo3} gives the result.
\end{proof}%}}}

\begin{lem}
\label{LMmsA}
Suppose $f:\Sigma\rightarrow\R^n$ is an immersed surface with boundaries satisfying \eqref{EQbc}, and $\gamma$ is a function as in \eqref{EQgamma}.
Let $q\in(0,6]$. Then
\begin{align*}
\int_\Sigma |A^o|^q|A|^{6-q}\gamma^4d\mu
&+ \int_\Sigma |\nnabla A|^2|A|^2\gamma^4d\mu
\\*
 &\le c\vn{A}^2_{2,[\gamma>0]}\int_\Sigma \big(|\nnabla_{(2)} A|^2 + |\nnabla A|^2|A|^2 + |A^o|^q|A|^{6-q}\big)\,\gamma^4d\mu
    + c\rho^{-4}\vn{A}^4_{2,[\gamma>0]}
\,,
\end{align*}
where $c$ depends only on $n$, $q$, and $\rho$ depends only on $\tilde\gamma$.
\end{lem}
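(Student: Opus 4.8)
The plan is to follow the proof of Lemma~\ref{LMmsAo}, applying the Michael--Simon Sobolev inequality of Theorem~\ref{MSSbdy} (with $m=2$) to two well-chosen cut-off test functions, estimating the resulting first-order $L^1$-integrals by Cauchy--Schwarz so as to peel off a single factor $\vn{A}_{2,[\gamma>0]}$, and then closing up by Young's inequality and absorption. Write for brevity $X=\int_\Sigma|A^o|^q|A|^{6-q}\gamma^4\,d\mu$, $Y=\int_\Sigma|\nnabla A|^2|A|^2\gamma^4\,d\mu$ and $Z=\int_\Sigma|\nnabla_{(2)}A|^2\gamma^4\,d\mu$; the assertion is precisely $X+Y\le c\vn{A}^2_{2,[\gamma>0]}(X+Y+Z)+c\rho^{-4}\vn{A}^4_{2,[\gamma>0]}$.

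First I would bound $X$ by applying Theorem~\ref{MSSbdy} to $u=|A^o|^{q/2}|A|^{(6-q)/2}\gamma^2$. The boundary integral $\int_{\partial\Sigma}|A^o|^{q/2}|A|^{(6-q)/2}\gamma^2\,d\mu^\partial$ vanishes because $|A^o|\equiv0$ on $\partial\Sigma$ by \eqref{EQbc} and $q>0$; this is exactly why a positive power of the \emph{tracefree} second fundamental form must be present, and why the argument degenerates for $q=0$ under umbilic boundary conditions (there one needs instead the flat conditions \eqref{EQfbc}, under which $|A|=0$ on $\partial\Sigma$). Squaring the Sobolev inequality and estimating $|\nabla u|$ by $|\nabla|A^o||\le|\nnabla A^o|$, $|\nabla|A||\le|\nnabla A|$, the Codazzi comparison $|\nnabla A^o|\le c|\nnabla A|$ (cf.\ \eqref{EQ7}), $|A^o|\le|A|$, $|\H|\le c|A|$ and $|\nabla\gamma|\le\rho^{-1}$ turns the right-hand side into a finite sum of squared $L^1$-integrals. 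Each is split, exactly as in the chain leading to \eqref{EQmsAo1}, by the Cauchy--Schwarz inequality in the form $\big(\int_\Sigma|A|\,h\,d\mu\big)^2\le\vn{A}^2_{2,[\gamma>0]}\int_\Sigma h^2\,d\mu$, chosen so that the surviving $\gamma^4$-weighted factor $\int h^2$ is one of $X$ or $Y$ (using $|A^o|^q|A|^{2-q}\le|A|^2$ and, for $q\ge2$, $|A^o|^{q-2}|A|^{4-q}\le|A|^2$); the gradient-of-$\gamma$ cross term is iterated once more and dominated via Young's inequality, using $|A^o|^{2q}|A|^{6-2q}\le|A^o|^q|A|^{6-q}$, producing $c\rho^{-4}\vn{A}^4_{2,[\gamma>0]}$ together with a further multiple of $c\vn{A}^2_{2,[\gamma>0]}X$. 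This gives $X\le c\vn{A}^2_{2,[\gamma>0]}(X+Y)+c\rho^{-4}\vn{A}^4_{2,[\gamma>0]}$.

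Next I would bound $Y$, this time taking $u=|\nnabla A|\,|A|\,\gamma^2$ in Theorem~\ref{MSSbdy}. The boundary integral vanishes again, now because \eqref{EQ7} together with \eqref{EQbc} forces $\nnabla A=0$ on $\partial\Sigma$ (note $A$ itself need not vanish there). The only non-routine term is $\big(\int_\Sigma|\nnabla A|^2\gamma^2\,d\mu\big)^2$, which I would remove exactly as \eqref{EQmsAo2.5} was removed: integrate by parts, $\int_\Sigma|\nnabla A|^2\gamma^2\,d\mu=-\int_\Sigma\IP{A}{\nDelta A}\gamma^2\,d\mu-2\int_\Sigma\IP{\nabla\gamma\,A}{\nnabla A}\gamma\,d\mu+\int_{\partial\Sigma}\IP{\nnabla_\nu A}{A}\gamma^2\,d\mu^\partial$, note the boundary term is zero and $|\nDelta A|\le c|\nnabla_{(2)}A|$, then use Cauchy--Schwarz, Young and absorption to get $\big(\int_\Sigma|\nnabla A|^2\gamma^2\,d\mu\big)^2\le c\vn{A}^2_{2,[\gamma>0]}Z+c\rho^{-4}\vn{A}^4_{2,[\gamma>0]}$. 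Treating the remaining Sobolev terms by the same Cauchy--Schwarz device yields $Y\le c\vn{A}^2_{2,[\gamma>0]}(Y+Z)+c\rho^{-4}\vn{A}^4_{2,[\gamma>0]}$, and adding this to the estimate for $X$ proves the lemma, since $X+Y+Z$ is exactly the integrand claimed on the right.

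The main point requiring care is the first-order term from differentiating $|A^o|^{q/2}$ in the estimate for $X$: one has $|\nabla(|A^o|^{q/2})|\le\tfrac q2|A^o|^{q/2-1}|\nnabla A^o|$, and for $q\in(0,2)$ the factor $|A^o|^{q/2-1}$ is singular along the zero set of $A^o$, so that $u$ is merely $W^{1,1}$ and the weights in the Cauchy--Schwarz step must be chosen with care; this is handled by the standard device of replacing $|A^o|$ by $(|A^o|^2+\sigma^2)^{1/2}$, deriving the estimates with constants independent of $\sigma>0$ (the boundary integral of the regularised $u$ tends to $0$ as $\sigma\searrow0$) and passing to the limit, while for $q\ge2$ all fractional powers above have non-negative exponents once $|A^o|\le|A|$ is invoked and no regularisation is needed. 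The underlying bookkeeping difficulty throughout is the tension between retaining the $|A^o|$-weight --- indispensable for the vanishing of the boundary integral --- and simplifying powers via $|A^o|\le|A|$ so as to extract the small factor $\vn{A}_{2,[\gamma>0]}$; the interplay of the boundary conditions is exactly that \eqref{EQbc} alone kills every boundary term containing $A^o$ or $\nnabla A^o$ and, via Codazzi, also $\nnabla A$ on $\partial\Sigma$, but not $A$ itself, consistent with spheres surviving under umbilic (but not flat) boundary conditions.
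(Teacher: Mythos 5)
Your proof follows the paper's argument essentially step for step: the same two test functions in Theorem \ref{MSSbdy} (the paper writes $u=|A^o|^p|A|^{3-p}\gamma^2$ with $2p=q$, and takes $u=|\nnabla A^o|\,|A|\,\gamma^2$ rather than your $|\nnabla A|\,|A|\,\gamma^2$ --- an immaterial difference, since \eqref{EQ7} together with \eqref{EQbc} kills the boundary term either way), the same integration-by-parts removal of $\big(\int|\nnabla A^o|^2\gamma^2\,d\mu\big)^2$ as in \eqref{EQmsAo2.5}, and the same Cauchy--Schwarz/Young bookkeeping for the $\nabla\gamma$ cross terms. The one substantive point you raise that the paper passes over in silence --- the singular factor $|A^o|^{q/2-1}$ in $|\nabla u|$ when $q\in(0,2)$ --- is a genuine issue, but your proposed regularisation does not by itself resolve it: after replacing $|A^o|$ by $(|A^o|^2+\sigma^2)^{1/2}$ the relevant gradient contribution is still only bounded by $(|A^o|^2+\sigma^2)^{(q-2)/4}|\nnabla A^o|\,|A|^{3-q/2}\gamma^2$, and after the Cauchy--Schwarz step the surviving weight $(|A^o|^2+\sigma^2)^{(q-2)/2}|A|^{4-q}$ is not dominated by $|A|^2$ uniformly in $\sigma$ wherever $|A^o|\ll|A|$, so the constants are not $\sigma$-independent; the paper's own display \eqref{EQmsA1} implicitly asserts $|\nabla(|A^o|^{q/2}|A|^{3-q/2})|\le c|A|^2|\nnabla A|$, which is likewise only valid pointwise for $q\ge2$, so on this point your write-up is no worse than (and more candid than) the source.
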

\begin{proof}%{{{
As this is similar to the proof of Lemma \ref{LMmsAo} above, we present the proof only briefly.
The tricky part is the application of the boundary conditions \eqref{EQbc}. At the zero order level, we do not have $|A| = 0$ on $\partial\Sigma$ and this causes the tracefree second fundamental form to make a compulsory
appearance in the growth condition for $\vSG$.
It is also not in general true that $|\nnabla A| = 0$ on $\partial\Sigma$.  Indeed, the Codazzi relations on $\partial\Sigma$ are not enough to obtain this and so one must be careful with the boundary terms arising from the
divergence theorem.

We begin by applying Theorem \ref{MSSbdy} with $u = |A^o|^p|A|^{3-p}\,\gamma^2$ where $2p = q$ to find
\begin{align}
\int_\Sigma |A^o|^{2p}|A|^{6-2p}\gamma^4d\mu
 &\le c\bigg(\int_\Sigma \big(|A|^2|\nnabla A|\,\gamma^2 + \rho^{-1}|A^o|^p|A|^{3-p}\gamma + |A^o|^p|A|^{4-p}\gamma^2\big)\,d\mu\bigg)^2
\notag\\&\qquad
    + c\bigg(\int_{\partial\Sigma} |A^o|^p|A|^{3-p}\gamma^2d\mu^\partial\bigg)^2
\notag\\
 &\le c\vn{A}^2_{2,[\gamma>0]}\int_\Sigma \big(|\nnabla A|^2|A|^2 + |A|^{6-2p}|A^o|^{2p}\big)\,\gamma^4d\mu
    + c\rho^{-4}\vn{A}^4_{2,[\gamma>0]}
\label{EQmsA1}
\,.
\end{align}
Now we apply \eqref{EQ7} and Theorem \ref{MSSbdy} with $u = |\nnabla A^o|\,|A|\,\gamma^2$ to obtain
\begin{align}
\int_\Sigma &|\nnabla A|^2|A|^2\gamma^4d\mu
\le \int_\Sigma |\nnabla A^o|^2|A|^2\gamma^4d\mu
\notag\\&
 \le c\bigg(\int_\Sigma \big(|A|\,|\nnabla_{(2)} A^o|\,\gamma^2 + |\nnabla A^o|^2\gamma^2
    + \rho^{-1}|\nnabla A^o|\,|A|\,\gamma^2 + |\H|\,|A|\,|\nnabla A^o|\,\gamma^2\big)\,d\mu\bigg)^2
\notag\\&\qquad
    + c\bigg(\int_{\partial\Sigma} |A|\,|\nnabla A^o|\,\gamma^2d\mu^\partial\bigg)^2
\notag\\
 &\le c\vn{A^o}^2_{2,[\gamma>0]}\int_\Sigma \big(|\nnabla_{(2)} A|^2 + |\nnabla A|^2|A|^2 \big)\,\gamma^4d\mu
    + c\bigg(\int_\Sigma |\nnabla A^o|^2\gamma^2d\mu\bigg)^2 + c\rho^{-4}\vn{A}^4_{2,[\gamma>0]}
\notag\\
 &\le c\vn{A^o}^2_{2,[\gamma>0]}\int_\Sigma \big(|\nnabla_{(2)} A|^2 + |\nnabla A|^2|A|^2 \big)\,\gamma^4d\mu
    + c\rho^{-4}\vn{A}^4_{2,[\gamma>0]}
\label{EQmsA2}
\,,
\end{align}
where we used an argument analogous to \eqref{EQmsAo2.5}.
Adding together \eqref{EQmsA1} and \eqref{EQmsA2} finishes the proof.
\end{proof}%}}}

\begin{prop}
\label{PPfinal}
Suppose $f:\Sigma\rightarrow\R^n$ is an immersed surface with boundaries satisfying \eqref{EQbc}.
Assume the operator $\vSG$ is of the form \eqref{EQgeneralpde} satisfying \eqref{EQanisotropypositive} and \eqref{EQa0growthonT}.
Then there exists a universal $\varepsilon>0$ such that if
\[
\int_\Sigma |A^o|^2d\mu \le \varepsilon
\]
then
\begin{align*}
\int_\Sigma &\big(|\nnabla_{(2)}A|^2 + |\nnabla A|^2|A|^2 + |A|^4|A^o|^2\big)\gamma^4\,d\mu
\le c\int_\Sigma |\vSG|^2\gamma^4\,d\mu + \frac{c}{\rho^4}\int_{[\gamma^4>0]} |A^o|^2d\mu
\,.
\end{align*}
\end{prop}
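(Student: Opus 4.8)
The plan is to feed the multiplicative Sobolev estimate of Lemma~\ref{LMmsAo} into the lower bound of Lemma~\ref{LMlm5} and then absorb, exploiting that $\vn{A^o}_2$ is precisely the small parameter multiplying the principal term in Lemma~\ref{LMmsAo}.

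First I would recall Lemma~\ref{LMlm5}, which already bounds $\int_\Sigma(|\nnabla_{(2)}A|^2 + |A|^2|\nnabla A|^2 + |A|^4|A^o|^2)\gamma^4\,d\mu$ by $c\int_\Sigma|\vSG|^2\gamma^4\,d\mu$, the ``bad'' curvature integral $c\int_\Sigma(|A^o|^6 + |\nnabla A^o|^2|A^o|^2)\gamma^4\,d\mu$, and $\tfrac{c}{\rho^4}\int_{[\gamma>0]}|A^o|^2\,d\mu$; note that $[\gamma>0]=[\gamma^4>0]$. The only term not already of the desired shape is the bad curvature integral.

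Next I would estimate exactly that integral by Lemma~\ref{LMmsAo}, which bounds it by $c\vn{A^o}^2_{2,[\gamma>0]}\int_\Sigma(|\nnabla_{(2)}A|^2 + |\nnabla A|^2|A|^2 + |A|^2|A^o|^4)\gamma^4\,d\mu + c\rho^{-4}\vn{A^o}^4_{2,[\gamma>0]}$. The key pointwise remark is that $|A^o|\le|A|$, hence $|A|^2|A^o|^4\le|A|^4|A^o|^2$; therefore the integrand appearing on the right here is dominated termwise by the left-hand integrand of Lemma~\ref{LMlm5}. Since $\vn{A^o}^2_{2,[\gamma>0]}\le\int_\Sigma|A^o|^2\,d\mu\le\varepsilon$, choosing $\varepsilon$ small (depending only on the universal constants in Lemmas~\ref{LMlm5} and~\ref{LMmsAo}) makes this coefficient at most $\tfrac12$, so the extra copy of $\int_\Sigma(|\nnabla_{(2)}A|^2 + |\nnabla A|^2|A|^2 + |A|^4|A^o|^2)\gamma^4\,d\mu$ is absorbed into the left-hand side. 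Finally $\vn{A^o}^4_{2,[\gamma>0]}\le\varepsilon\int_{[\gamma>0]}|A^o|^2\,d\mu$, so the two $\rho^{-4}$ contributions merge into a single $\tfrac{c}{\rho^4}\int_{[\gamma^4>0]}|A^o|^2\,d\mu$, which gives the claim.

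I do not expect a genuine obstacle here: all the analytic work was carried out in Section~3 and in Lemma~\ref{LMmsAo}. The only points needing care are (i) checking that every term produced by the Sobolev estimate is actually absorbable — in particular that $|A|^2|A^o|^4$ appears with the extra factor $|A|^2$, so that $|A^o|\le|A|$ may be invoked to dominate it by the left-hand integrand — and (ii) tracking that all constants remain independent of $\rho$ and of $f$, so that the smallness threshold $\varepsilon$ is genuinely universal.
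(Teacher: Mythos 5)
Your proposal is correct and is exactly the paper's argument: the paper's own proof of Proposition \ref{PPfinal} is the one-line instruction ``Combine Lemma \ref{LMlm5} with Lemma \ref{LMmsAo} and absorb,'' and you have supplied precisely the absorption details, including the key pointwise domination $|A|^2|A^o|^4\le|A|^4|A^o|^2$ that makes the Sobolev term absorbable. No gaps.
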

\begin{proof}
Combine Lemma \ref{LMlm5} with Lemma \ref{LMmsAo} and absorb.
\end{proof}

\begin{prop}
\label{PPfinal2}
Suppose $f:\Sigma\rightarrow\R^n$ is an immersed surface with boundaries satisfying \eqref{EQbc} and
$\T$ satisfying \eqref{EQagrowthonT}.
Then there exists a universal $\varepsilon>0$ such that if
\[
\int_\Sigma |A|^2d\mu \le \varepsilon
\]
then
\begin{align*}
\int_\Sigma &\big(|\nnabla_{(2)}A|^2 + |\nnabla A|^2|A|^2 + |A^o|^q|A|^{6-q}\big)\gamma^4\,d\mu
\le c\int_\Sigma |\vSG|^2\eta\,d\mu
   + \frac{c}{\rho^4}\int_{[\eta>0]} |A|^2d\mu
\,.
\end{align*}
\end{prop}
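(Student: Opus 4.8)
The plan is to follow the proof of Proposition \ref{PPfinal} essentially verbatim, with every $A^o$-regime estimate replaced by its $A$-regime analogue: one feeds the multiplicative Sobolev interpolation of Lemma \ref{LMmsA} into the lower bound of Lemma \ref{LMlm6} and absorbs, exploiting the smallness hypothesis $\int_\Sigma|A|^2\,d\mu\le\varepsilon$.

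Concretely, I would first apply Lemma \ref{LMlm6}, which bounds $\int_\Sigma|\nnabla_{(2)}A|^2\gamma^4\,d\mu$ by $c\int_\Sigma|\vSG|^2\gamma^4\,d\mu$ plus the reaction integral $c\int_\Sigma\big(|A^o|^q|A|^{6-q}+|\nnabla A|^2|A|^2\big)\gamma^4\,d\mu$ plus $c\rho^{-4}\int_{[\gamma>0]}|A|^2\,d\mu$. Next I would add the two reaction integrals $\int_\Sigma\big(|\nnabla A|^2|A|^2+|A^o|^q|A|^{6-q}\big)\gamma^4\,d\mu$ to both sides — so that the left-hand side becomes exactly the quantity in the statement — and estimate the copies now sitting on the right by Lemma \ref{LMmsA}, which controls them by $c\vn{A}^2_{2,[\gamma>0]}\int_\Sigma\big(|\nnabla_{(2)}A|^2+|\nnabla A|^2|A|^2+|A^o|^q|A|^{6-q}\big)\gamma^4\,d\mu+c\rho^{-4}\vn{A}^4_{2,[\gamma>0]}$. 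Since $\vn{A}^2_{2,[\gamma>0]}\le\int_\Sigma|A|^2\,d\mu\le\varepsilon$, for $\varepsilon$ small enough (depending only on $n$, $a_0$, $c_1$, $q$) the coefficient $c\vn{A}^2_{2,[\gamma>0]}$ is as small as desired, so the three integrals it multiplies on the right may be absorbed into the left-hand side; the quartic error is disposed of via $\vn{A}^4_{2,[\gamma>0]}\le\varepsilon\int_{[\gamma>0]}|A|^2\,d\mu$, which turns it into a small multiple of the boundary term already present. Writing $\eta=\gamma^4$ (so that $[\eta>0]=[\gamma>0]$) then gives the asserted inequality.

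I do not anticipate a serious obstacle here: the genuinely delicate features of this regime — that the umbilic boundary conditions \eqref{EQbc} do not force $|A|$ or $|\nnabla A|$ to vanish on $\partial\Sigma$, so that a power of $|A^o|$ must survive in the growth term and the boundary integrals generated by the various integrations by parts must be controlled via the Michael--Simon inequality on $\partial\Sigma$ — are already folded into Lemmas \ref{LMlm6} and \ref{LMmsA}. What remains here is only the bookkeeping of combining those two inequalities together with a single absorption step, valid once $\varepsilon$ is fixed small.
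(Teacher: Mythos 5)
Your proof is correct and coincides with the paper's own argument, which is stated in exactly these terms: combine Lemma \ref{LMlm6} with Lemma \ref{LMmsA}, add the reaction integral $\int_\Sigma\big(|A^o|^q|A|^{6-q}+|\nnabla A|^2|A|^2\big)\gamma^4\,d\mu$ to both sides, and absorb using the smallness of $\vn{A}^2_{2,[\gamma>0]}$. Your additional remarks on disposing of the quartic error term and on identifying $\eta$ with $\gamma^4$ are consistent with the intended reading of the statement.
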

\begin{proof}
Combine Lemma \ref{LMlm6} with Lemma \ref{LMmsA}, add the integral
\[
\int_\Sigma \big(|A^o|^q|A|^{6-q} + |\nnabla A|^2|A|^2\big)\gamma^4d\mu
\]
to both sides, and absorb.
\end{proof}

\begin{proof}[Proof of Theorem \ref{TMmt}]
Choose $\tilde\gamma$ to be a cutoff function on an ambient ball of radius $r>0$. We may guarantee that
\[
\rho = cr
\]
for some constant $c$ depending only on $n$. Using $\vSG = 0$ and taking $r\rightarrow\infty$ (recall that all boundary integrals are calculated above to vanish)
in each of Propositions \ref{PPfinal} and \ref{PPfinal2}, we find $|A^o| = 0$ and so $f$ is an umbilic.

To obtain the full statement in case \eqref{EQAsmallness}, just note that in order for pieces of spheres to be possible each boundary $\partial\Sigma$ must be compact and must lie on the surface of a 2-sphere sitting in $\R^n$.
\end{proof}

\begin{rmk}
Since $\vn{A}_2^2$ is scale invariant, the only way to satisfy \eqref{EQAsmallness} is by each boundary being pulled sufficiently tight. Suppose $P$ is a piece of a sphere with radius $\rho$. Then clearly
\[
\int_{f^{-1}(P)}|A|^2d\mu < 4\pi^2\,.
\]
A positive lower bound is not possible, but it is possible to find a lower bound in terms of the diameter of $\partial P$ in $\R^n$. This would sharpen the statement of Theorem \ref{TMmt}.
\end{rmk}

%}}}

\appendix

\section*{Appendix}

\begin{proof}[Proof of Theorem \ref{MSSbdy}]
Let $\Lambda:M\rightarrow(0,\infty)$ be the distance to $\partial M$ and consider the family of functions $\sigma_k(p) = \text{min}\{u(p),k\Lambda(p)u(p)\}$ for $p\in M$.
Let us now approximate the family $\sigma_k$ in the $C^1$ topology; this approximation we also denote by $\sigma_k$. We have
\begin{align*}
|[u \ne \sigma_k]| \longrightarrow 0\quad
\\
\Big(\int_M |\sigma_k|^{m/(m-1}\,d\mu\Big)^{(m-1)/2}
 \longrightarrow
\Big(\int_M |u|^{m/(m-1}\,d\mu\Big)^{(m-1)/2}
\\
\int_M |\H||\sigma_k|\,d\mu
 \longrightarrow
\int_M |\H||u|\,d\mu
\end{align*}
as $k\rightarrow\infty$.
We clearly also have $\vn{\nabla \sigma_k}_1 \le \vn{|\nabla u|\text{min}\{1,k\Lambda\}}_1 + \vn{u\nabla\text{min}\{1,k\Lambda\}}_1$.
Using Fermi coordinates on a neighbourhood of $\partial M$ we find for large $k$
\begin{align*}
\int_M |u|\,|\nabla\text{min}\{1,k\Lambda\}|\,d\mu
\le
k\int_0^{\frac1k}\int_{\partial M} |u|\sqrt{\text{det}(g_{ij}(p,t)}\,d\SL^{m-1}\,dt
\rightarrow
\int_{\partial M} |u|\,d\mu^\partial
\end{align*}
as $k\rightarrow\infty$.
Applying the standard Michael-Simon Sobolev inequality \cite{MS73} to the family $\sigma_k$ and taking $k\rightarrow\infty$ finishes the proof.
\end{proof}

\section*{Acknowledgements}

Part of this work was completed while the author was an Alexander von Humboldt research fellow at the Otto-von-Guericke Universit\"at Magdeburg. 
The author is grateful for the support of the Alexander von Humboldt Stiftung.
Part of this work was completed while the author was a guest of the group of Harald Garcke at Regensburg.  He is grateful for their hospitality.
Part of this work was completed while the author was a research associate at the Institute for Mathematics and its Applications at the University of Wollongong, on the Australian Research Council's Discovery Projects scheme
(project number DP120100097).

\bibliographystyle{plain}
\bibliography{critical}

\end{document}